\newtheorem{theorem}{Theorem}[section]
\newtheorem{lemma}[theorem]{Lemma}
\newtheorem{proposition}[theorem]{Proposition}
\newtheorem{corollary}[theorem]{Corollary}
\newtheorem{question}[theorem]{Question}
\theoremstyle{definition}
\newtheorem{example}[theorem]{Example}
\newtheorem{remark}[theorem]{Remark}
\def\dfn#1{{\em #1}}
\newcommand{\R}{\ensuremath{\mathbb{R}}}
\newcommand{\Z}{\ensuremath{\mathbb{Z}}}
\newcommand{\C}{\ensuremath{\mathbb{C}}}
\def\co{\colon\thinspace}
\numberwithin{equation}{section}
\title[Embedding contact $3$--manifolds]{Embedding all contact $3$--manifolds in a fixed contact 5--manifold}
\author{John B. Etnyre}
\address{School of Mathematics \\ Georgia Institute of Technology}
\email{etnyre@math.gatech.edu}
\urladdr{\href{http://www.math.gatech.edu/~etnyre}{http://www.math.gatech.edu/\~{}etnyre}}
\author{Yanki Lekili}
\address{King's College London}
\email{yanki.lekili@kcl.ac.uk}
\urladdr{\href{https://nms.kcl.ac.uk/yanki.lekili/}{https://nms.kcl.ac.uk/yanki.lekili/}}
\begin{document}

\maketitle

\begin{abstract}
In this note we observe that one can contact embed all contact 3--manifolds into a Stein fillable contact structure on the twisted $S^3$--bundle over $S^2$ and also into a unique overtwisted contact structure on $S^3\times S^2$. These results are proven using ``spun embeddings'' and Lefschetz fibrations. 
\end{abstract}

 \section{Introduction}
A basic question in geometric topology is the embedding problem of manifolds: given two
(smooth) manifolds $M$ and $N$, can one find a smooth embedding of $M$ into $N$? In particular,
given $M$ what is a simple space $N$ in which $M$ can be embedded? Seeing an abstract manifold as a
submanifold of a simple space can provide concrete ways to describe the manifold as well as new
avenues to study the manifold. Whitney proved, by using transversality arguments and the famous ``Whitney trick'' that 
any $n$--manifold embeds in $\R^{2n}$ \cite{Whitney44}. This is the smallest possible
Euclidean space for which one can prove such a general theorem, though for specific manifolds and
specific values of $n$ one can do better.  For example, Hirsch \cite{Hirsch61} proved that all
oriented $3$--manifolds embed in $\R^5$, and Wall \cite{Wall65} then removed the orientability
assumption. There are homological (and other) obstructions to embedding $3$--manifolds in $\R^4$,
though Freedman \cite{Freedman82} did show that all homology $3$--spheres topologically, locally
flatly embed in $\R^4$. (To be clear, in this paper we will always be considering smooth embeddings
unless explicitly stated otherwise.) Since one cannot embed all $3$--manifolds in $\R^4$ one might
ask is there a, let us say, compact 4--manifold into which all $3$--manifolds embed? Shiomi
\cite{Shiomi91} answered this in the negative. So $\R^5$ can certainly be said to be the simplest
manifold into which all $3$--manifolds embed. Let us also mention that once an embedding is found, one could also ask the question of whether it is unique (up to isotopy). For example, the Schoenflies problem which concerns smooth
embeddings of $S^3$ into $\R^4$ is among the most famous open problems of topology. 

This paper is concerned with the contact analog of the above discussion: {\em What is the simplest
contact manifold into which all co-oriented contact $3$--manifolds embed?} (Throughout this paper all contact structures will be assumed to be co-oriented.) Recall that a smooth embedding $f\colon  M \to N$ is said to be a contact embedding of the contact manifold $(M,
\xi)$ into $(N, \xi' )$ if $df(TM) \pitchfork\xi'$ and $df(\xi) \subset \xi'$. In the case $M$ is
$1$-dimensional and $N$ is $3$-dimensional, a contact embedding $f: M^{1} \to N^{3}$ is also known as a transverse knot.

The analog of Whitney's theorem,
proven by Gromov \cite{Gromov86}, is that any contact $(2n+1)$--manifold can be embedded in the
standard contact structure on $\R^{4n+3}$. In particular, a contact $3$--manifold can
be embedded in $(\R^7,\xi_{std})$. As having an embedding in $\R^n$ is the same as having one in $S^n$ and we prefer to work with compact manifolds we will switch to consider spheres instead of Euclidean spaces. This result was reproved in \cite{Torres11}, based on work in \cite{Mori04}, using open book decompositions (which will be a key ingredient in the work presented in this paper too, but they will be used in a fairly different way). Showing that Hirsch's result does not generalize to the contact category we have the following result of Kasuya.
\begin{theorem}[Kasuya, 2014 \cite{Kasuya14}]\label{cantembed}
If $(M^{2n-1},\xi)$ embeds in $(N^{2n+1},\xi')$, $[M]=0$ in $H_{2n-1}(N;\Z)$ and $c_1(\xi')=0$, then $c_1(\xi)=0$.
\end{theorem}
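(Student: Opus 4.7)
The plan is to compare the first Chern classes of $\xi$ and $\xi'$ via the normal bundle of the contact embedding, and then use $[M]=0$ to force the normal bundle contribution to vanish.

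First, because $i\co M\hookrightarrow N$ is a contact embedding, $di(\xi)$ sits inside $\xi'|_M$ as a symplectic (hence complex) subbundle of corank $2$. Taking symplectic complements yields a splitting
$$\xi'|_M \cong \xi\oplus\nu,$$
where $\nu$ is a rank-$2$ complex vector bundle on $M$. Comparing the two decompositions $TN|_M = \xi'|_M\oplus\langle R_{\alpha'}\rangle$ and $TN|_M = TM\oplus\nu_{M/N} = \xi\oplus\langle R_\alpha\rangle\oplus\nu_{M/N}$ identifies $\nu$, stably as a complex bundle, with the topological normal bundle $\nu_{M/N}$, which is itself a complex line bundle.

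Next, a contact structure makes the ambient tangent bundle stably complex: $TN\oplus\R\cong\xi'\oplus\C$ (absorbing the Reeb line), so $c_1(TN)=c_1(\xi')$, and similarly $c_1(TM)=c_1(\xi)$. Applying additivity of $c_1$ to $TN|_M = TM\oplus\nu_{M/N}$ then gives
$$c_1(\xi')|_M = c_1(\xi)+c_1(\nu_{M/N}) = c_1(\xi)+e(\nu_{M/N}),$$
using that $c_1$ equals $e$ for a complex line bundle. Combined with $c_1(\xi')=0$, this reduces matters to showing $e(\nu_{M/N})=0$.

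For this last step, assuming $N$ is closed and oriented, the self-intersection formula (via the Thom class of a tubular neighborhood of $M$ and Poincar\'e duality) gives $e(\nu_{M/N}) = i^*\,\mathrm{PD}_N[M]\in H^2(M;\Z)$; since $[M]=0$ in $H_{2n-1}(N;\Z)$ forces $\mathrm{PD}_N[M]=0$, we conclude $e(\nu_{M/N})=0$ and therefore $c_1(\xi)=0$. I expect the step that will require the most care to be the bookkeeping in the first paragraph---matching up the two Reeb lines and the two notions of ``normal bundle'' (symplectic orthogonal to $\xi$ inside $\xi'|_M$ versus topological normal bundle of $M$ in $N$); working in the stably complex category throughout should make this routine. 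For non-closed $N$ one replaces Poincar\'e duality by Poincar\'e-Lefschetz duality, without changing the conclusion.
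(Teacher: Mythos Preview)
Your argument is correct and follows the same template as the paper: split $\xi'|_M$ as $\xi\oplus\nu$, identify $\nu$ with the topological normal bundle $\nu_{M/N}$, and then use $[M]=0$ to kill the contribution from $\nu$. The paper packages the first step via the contact neighborhood theorem (Lemma~\ref{basicobstruction}), and for the second step argues the stronger statement that $\nu_{M/N}$ is actually \emph{trivial}: since $[M]=0$, the image $e(M)$ bounds an embedded hypersurface in $N$, and a normal framing of that hypersurface restricts to a trivialization of $\nu_{M/N}$ (citing \cite{Kirby89}). Your route through the self-intersection formula $e(\nu_{M/N})=i^*\mathrm{PD}_N[M]$ is slightly more streamlined and only requires the Euler class to vanish rather than the full bundle to be trivial; the paper's route yields the stronger intermediate conclusion but needs the existence of a bounding hypersurface.

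One remark on the bookkeeping you flag: the identification $\nu\cong\nu_{M/N}$ is in fact unstable and canonical, so you can bypass the detour through stable complex structures on $TN$ and $TM$ entirely. The contact embedding condition gives $T_pM\cap\xi'_p=\xi_p$ and $T_pM+\xi'_p=T_pN$ at each point (the first by $di(\xi)\subset\xi'$ plus a dimension count, the second by transversality), so the quotient map $\xi'|_M\to TN|_M/TM=\nu_{M/N}$ has kernel exactly $\xi$ and hence identifies $\nu=\xi'|_M/\xi$ with $\nu_{M/N}$ directly as oriented plane bundles. Then $c_1(\xi')|_M=c_1(\xi)+c_1(\nu)=c_1(\xi)+e(\nu_{M/N})$ follows immediately from the complex splitting $\xi'|_M=\xi\oplus\nu$, with no need to invoke $c_1(TN)$ or $c_1(TM)$.
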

Recall that a contact structure $\xi=\ker \alpha$ has a symplectic structure given by $d\alpha$ and
associated to this symplectic structure there is a unique homotopy class of a compatible complex structure on $\xi$. When referring to the Chern classes of $\xi$ we are using this complex structure. We will reprove and extend Theorem~\ref{cantembed} in Section~\ref{sec:obstruction}. 

From this theorem we see that there are many contact $3$--manifolds that do not embed in
$(S^5,\xi_{std})$. One might hope that a contact $3$--manifold embeds in $(S^5,\xi_{std})$ if and
only if it has trivial Chern class. While this is still an open question the following partial
results are obtained in a recent work of the first author and Furukawa.
\begin{theorem}[Etnyre-Furukawa, 2017 \cite{EtnyreFurukawa17}]
If $M$ is a $3$--manifold with no $2$--torsion in its second homology then an overtwisted contact structure embeds in $(S^5,\xi_{std})$ if and only if its first Chern class is zero.

If $M$ is $S^3$, $T^3$, or a lens space $L(p,q)$ with $p$ odd and $q$ arbitrary, or when $p$ is even and $q=1$ or $p-1$
respectively, then a contact structure on $M$ embeds in $(S^5,\xi_{std})$ if and only if it has trivial first Chern class. 
\end{theorem}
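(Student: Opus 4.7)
The only-if direction in both statements is immediate from Theorem~\ref{cantembed}: since $H_3(S^5;\Z)=0$, any embedded $3$--manifold in $S^5$ is null-homologous, and $c_1(\xi_{std})=0$, so Kasuya's obstruction forces $c_1(\xi)=0$.

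For the overtwisted if-direction, the plan is to combine Eliashberg's classification of overtwisted contact structures (by homotopy class of $2$--plane field) with obstruction theory. The homotopy classes of oriented $2$--plane fields on a closed oriented $3$--manifold are organized by the Euler class $e(\xi)=c_1(\xi)\in H^2(M;\Z)$ together with a secondary $d_3$-type invariant; the $2$--torsion hypothesis on $H_2(M;\Z)$ translates, via Poincar\'e duality and universal coefficients, to the same for $H^2(M;\Z)$, so once $c_1=0$ is imposed the remaining ambiguity is a clean $\Z$-parameter (the otherwise troublesome contribution from $H^2(M;\Z/2)$ vanishes). It therefore suffices to (i) produce a single overtwisted iso-contact embedding $(M,\xi_0) \hookrightarrow (S^5,\xi_{std})$ with $c_1(\xi_0)=0$, and (ii) realize every allowed value of the secondary invariant by modifying the embedding. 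For (i) I would use a Lefschetz fibration/open book description: $M$ bounds some Weinstein $4$--manifold $W$, and $W$ embeds as a Weinstein hypersurface in $B^6\subset(S^5,\xi_{std})$, giving $M$ as a contact submanifold. For (ii) I would perform full Lutz twists along well-chosen transverse knots in $(M,\xi_0)$ that bound suitable convex surfaces in $S^5$, allowing the twist to be absorbed into an ambient modification of the embedding.

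For the specific manifolds $S^3$, $T^3$, and the listed lens spaces (all of which trivially have $2$--torsion-free $H_2$), the classifications of tight contact structures by Honda, Giroux and Kanda are finite, so the claim reduces to exhibiting finitely many explicit iso-contact embeddings. Concrete models come from Brieskorn links: the intersection $\{z_1^p+z_2^q+z_3^r=0\}\cap S^5$ inside $(S^5,\xi_{std})\subset\C^3$ is a contact submanifold realizing $(S^3,\xi_{std})$, the lens spaces $L(p,q)$ in the allowed range (with their Stein fillable structures), and, via branched covers or spinning constructions, $T^3$ with its Giroux contact structures $\xi_n$. The remaining $c_1=0$ contact structures --- the tight but non-Stein-fillable ones and the overtwisted ones --- are reached from these Stein fillable baselines by Lutz twists inside $(S^5,\xi_{std})$ exactly as in the overtwisted argument; the $p$-odd versus $p$-even/$q=1,p-1$ split in the hypothesis matches exactly the cases in which such Lutz modifications hit every $c_1=0$ homotopy class in the Honda/Giroux tables.

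The principal technical obstacle is step (ii): a full Lutz twist changes the contact structure on a tubular neighborhood of a transverse knot $K\subset M$ while leaving the smooth manifold $M$ fixed, so to preserve an ambient embedding one must verify that the twisted contact neighborhood still sits standardly inside $(S^5,\xi_{std})$. Achieving this requires arranging that $K$ bounds a convex surface in $S^5$ whose characteristic foliation matches the Lutz-tube model, and then precisely bookkeeping how the $d_3$ invariant shifts under each twist so as to exhaust all $\Z$-values (modulo the torsion already produced by the Brieskorn models). Establishing this compatibility between the obstruction-theoretic shift and the ambient geometry is the crux of the argument.
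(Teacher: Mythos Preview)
This theorem is quoted from \cite{EtnyreFurukawa17} and is not proved in the present paper; there is no proof here to compare against. That said, your proposal does not constitute a proof, and the gaps are substantive rather than cosmetic.

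The most concrete issue is in your step~(i). You propose to obtain an overtwisted embedding by taking a Weinstein $4$--manifold $W$ with $\partial W=M$, embedding $W$ as a Weinstein hypersurface in $B^6\subset (S^5,\xi_{std})$, and reading off the induced contact structure on $M$. But the contact structure induced on the boundary of a Weinstein domain is Stein fillable, hence tight, never overtwisted; so this does not produce the overtwisted $\xi_0$ you need. You could try to repair this by starting from a tight embedding and then Lutz twisting, but that simply folds~(i) into~(ii), which you already flag as unresolved. There is a second problem with~(i): the assertion that an arbitrary Weinstein $4$--manifold embeds as a Weinstein hypersurface in the standard $B^6$ is not a theorem you can cite; it is essentially equivalent to the open question of which contact $3$--manifolds embed in $(S^5,\xi_{std})$, so invoking it here is circular.

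Your step~(ii) is where the real content lies, and you have not supplied it. Saying that a full Lutz twist should be absorbable into an ambient modification of the embedding, provided $K$ bounds a convex surface with matching characteristic foliation, is a statement of hope, not an argument. You would need to exhibit, for each overtwisted class, an explicit isotopy or surgery inside $(S^5,\xi_{std})$ that realizes the Lutz twist on the image of $M$; nothing in your outline indicates how to do this. The actual argument in \cite{EtnyreFurukawa17} proceeds quite differently, via braided embeddings and open book decompositions (in the spirit of the spun embeddings used elsewhere in this paper), building the embeddings directly from an open book supporting $\xi$ rather than attempting to modify a fixed smooth embedding by local contact surgeries.
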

Kasuya \cite{Kasuya14} also proved that given a contact $3$--manifold with trivial first Chern
class, there is {\em some} contact structure on $\R^5$ into which it embeds, but the contact
structure on $\R^5$ could depend on the contact $3$--manifold and it may not necessarily be standard at infinity. Slightly extending Kasuya's proof by using recent work of Borman, Eliashberg, and Murphy \cite{BormanEliashbergMurphy} one can show the following.
\begin{theorem}[Etnyre-Furukawa, 2017 \cite{EtnyreFurukawa17}]
A contact $3$--manifold embeds into the unique overtwisted contact structure on $S^5$ if and only if it has trivial first Chern class. 
\end{theorem}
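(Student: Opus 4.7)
The ``only if'' direction is an immediate consequence of Theorem~\ref{cantembed}: since $H^2(S^5;\Z)=0$, any contact structure on $S^5$ (including the overtwisted one) has trivial first Chern class, and since $H_3(S^5;\Z)=0$, any embedded $3$--manifold is null-homologous. Thus the hypotheses of Theorem~\ref{cantembed} are automatically satisfied, and $c_1(\xi)=0$ is forced.

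For the ``if'' direction, my plan is to combine Kasuya's embedding result with the existence/uniqueness theorem of Borman--Eliashberg--Murphy. First, given $(M,\xi)$ with $c_1(\xi)=0$, Kasuya's theorem produces a contact embedding of $(M,\xi)$ into some contact structure on $\R^5$ (not necessarily standard at infinity). Since $M$ is compact, I may assume the image lies in a ball, so I obtain a contact embedding of $(M,\xi)$ into a contact ball $(B^5,\eta)$. Embedding $B^5$ as a standard ball in $S^5$, I then wish to extend $\eta$ to a (necessarily overtwisted) contact structure on all of $S^5$.

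To do this, I would first extend the underlying almost contact structure from $B^5$ to all of $S^5$. The obstruction to such an extension lies in cohomology groups that vanish on $S^5$: the relevant obstruction classes involve $H^{\ast}(S^5,B^5;\pi_{\ast-1}(U/O\text{-type fibers}))$ and in particular the $c_1$-type obstruction vanishes because $H^2(S^5;\Z)=0$. In fact, $S^5$ admits a unique almost contact structure up to homotopy, so any almost contact structure on a subset with no obstructing characteristic classes extends to it.

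Finally, I would apply the Borman--Eliashberg--Murphy \emph{relative existence theorem} (\cite{BormanEliashbergMurphy}) to modify the almost contact structure on $S^5\setminus \mathrm{int}(B^5)$ (keeping $\eta$ fixed on a neighborhood of $B^5$, and in particular on $M$) to a genuine contact structure that is overtwisted. The BEM \emph{uniqueness theorem} then identifies the resulting contact structure, up to isotopy, as the unique overtwisted contact structure on $S^5$ in this almost contact class (and there is only one almost contact class). Thus $(M,\xi)$ embeds in the unique overtwisted contact structure on $S^5$. The main conceptual step, though not a real obstacle given BEM's machinery, is the relative application of the BEM theorem: ensuring that $\eta$ (and hence the embedded $(M,\xi)$) is preserved during the overtwisted modification, which is exactly what the parametric/relative form of their theorem provides.
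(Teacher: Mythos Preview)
Your proposal is correct and follows precisely the approach the paper indicates. The paper does not give a full proof of this cited result; it only remarks that one obtains it by ``slightly extending Kasuya's proof by using recent work of Borman, Eliashberg, and Murphy,'' and your argument is exactly that extension: Kasuya supplies a contact embedding into some contact $\mathbb{R}^5$, compactness localizes it to a ball, obstruction theory (using that $S^5$ has a unique almost contact structure) extends the almost contact structure over $S^5$, and the relative form of Theorem~\ref{otex} upgrades this to a genuine overtwisted contact structure fixed near $M$, which BEM uniqueness then identifies with the unique overtwisted structure on $S^5$.
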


The overtwisted contact structure on $S^5$ is somewhat mysterious (though it has a simple description in terms of open book decompositions), so it would still be of great interest to have a similar theorem for embedding into $(S^5,\xi_{std})$.  It is at least known that any contact 3--manifold embeds in a symplectically fillable contact 5--manifold \cite{CasalsPresasSandon16} and also a hyper-tight contact 5--manifold \cite{Gironella17}, but it should be noted that in these results the contact 5--manifold depends on the contact 3--manifold being embedded.

Continuing with the search for a simple contact $5$--manifold into which all contact $3$--manifolds
can embed we must consider $5$-manifolds other than the $5$--sphere. The next simplest class of
manifolds to consider are products of spheres. Below we will see that a theorem analogous to
Theorem~\ref{cantembed} implies that there is no contact structure on $S^1\times S^4$ into which one
can embed all contact $3$--manifolds. In addition, if there is a contact structure on $S^2\times
S^3$ into which all contact $3$--manifolds embed, it must have first Chern class $\pm 2\in \Z\cong
H^2(S^2\times S^3)$, see Corollary~\ref{onlys2s3}. Indeed, we show that there is such a contact
structure:
\begin{theorem}\label{otembed}
There is (up to contactomorphism) a unique overtwisted contact structure $\xi_{ot}$ on $S^2\times S^3$ into which all contact $3$--manifolds embed with trivial normal bundle. 
\end{theorem}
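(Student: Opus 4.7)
My plan is to split the proof into establishing the uniqueness of $\xi_{ot}$ and constructing the embeddings. For uniqueness, I would invoke the Borman--Eliashberg--Murphy $h$-principle, which classifies overtwisted contact structures on a closed manifold by homotopy classes of almost contact structures. Corollary~\ref{onlys2s3} forces $c_1(\xi_{ot})=\pm 2\in H^2(S^2\times S^3;\Z)\cong\Z$; the sign ambiguity is absorbed by an orientation-preserving self-diffeomorphism of $S^2\times S^3$. A direct obstruction-theoretic calculation on the simply connected manifold $S^2\times S^3$ shows that the homotopy class of an almost contact structure is determined by its first Chern class, yielding a unique $\xi_{ot}$ up to contactomorphism.

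For the existence of embeddings, given $(M,\xi)$, I would first pick a supporting open book $(P,\phi)$ via Giroux's correspondence. The main construction is a \emph{spun open book}: build a $5$-dimensional open book whose page is a $4$-manifold $W\supseteq P$, taken, e.g., to be $P\times D^2$ (or a Stein $2$-handlebody built from it), with monodromy $\widetilde{\phi}\co W\to W$ that restricts to $\phi$ on the embedded copy of $P$ and to the identity on $\partial W$; the latter is arranged by isotoping $\phi$ to the identity in the $D^2$-direction. The open book $\mathrm{OB}(W,\widetilde{\phi})$ is then a closed $5$-manifold $N$ with a compatible contact structure, and $(M,\xi)=\mathrm{OB}(P,\phi)$ sits inside $N$ as a contact submanifold with trivial normal bundle, because $P\subset W$ has trivial normal bundle and this is respected by the open book decompositions.

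The harder task is to show $N\cong S^2\times S^3$ and that the contact structure agrees with $\xi_{ot}$. I would realize the spun open book as a Lefschetz fibration by factoring $\widetilde{\phi}$ into Dehn twists on $W$, then use a handle-theoretic computation (after possibly stabilizing the input open book to standardize $P$) to verify $N\cong S^2\times S^3$. To ensure the contact structure is overtwisted, I can take a contact connected sum with a small overtwisted contact ball disjoint from $M$, which preserves the embedding. A direct Chern class computation from the Lefschetz data yields $c_1=\pm 2$, and the $h$-principle then identifies the resulting structure with the unique $\xi_{ot}$. The main obstacle will be uniform control over $N$: different inputs $(M,\xi)$ produce different open books $(P,\phi)$, so one must ensure $N\cong S^2\times S^3$ for every input via a robust handle calculation, and verify the Chern class uniformly across all inputs.
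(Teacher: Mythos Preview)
Your uniqueness argument is essentially the same as the paper's and is fine.

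Your existence argument, however, has a genuine gap, and the paper proceeds quite differently. First, the spun construction as you describe it is not well-defined: if $W=P\times D^2$ and you want $\widetilde\phi$ to restrict to $\phi$ on $P\times\{0\}$ and to the identity on $\partial W\supset P\times\partial D^2$, then ``isotoping $\phi$ to the identity in the $D^2$-direction'' requires $\phi$ to be isotopic to the identity on $P$, which it almost never is. Second, even granting some corrected version of the construction, your acknowledged ``main obstacle''---showing $N\cong S^2\times S^3$ uniformly in $(M,\xi)$---is not just hard but likely obstructed. Remark~\ref{difficulty} in the paper explains that one \emph{cannot} find vanishing cycles on a genus-$g$ surface that both generate the mapping class group and arise from a Lefschetz fibration on a page of an open book for $S^2\times S^3$: spin-structure considerations force the Dehn twists to land in a proper subgroup of $Sp_{2g}(\Z)$. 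So the Lefschetz/spun route you outline is precisely the one the paper abandons for $S^2\times S^3$ (it works instead for the twisted bundle $S^2\widetilde\times S^3$, Theorem~\ref{mainembedthm}).

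The paper's actual proof avoids open books entirely for this theorem. It first builds, for each $M$, an explicit \emph{smooth} embedding $M\hookrightarrow S^2\times S^3$ via handle theory (Section~5.1), with built-in flexibility: in the ``Modified Step III'' the $3$--handles cancelling the $2$--handles are attached along spheres that run $k_j$ times over an extra $2$--handle $H^2_{g+1}$, and the integers $k_j$ are free parameters. On the tubular neighbourhood $M\times D^2$ one has the genuine contact structure $\alpha+r^2\,d\theta$; the task is to extend this to an almost contact structure on all of $S^2\times S^3$, which is a section of the trivial $SO(5)/U(2)\cong\C P^3$ bundle. The only obstruction to extending over a $3$--handle lies in $\pi_2(\C P^3)\cong\Z$, and the freedom in the $k_j$ is used to kill exactly this obstruction. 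One then arranges $c_1(\eta)=\pm 2$ by the choice on $H^2_{g+1}$, and finally invokes the relative $h$-principle (Theorem~\ref{otex}) to homotope $\eta$ to a genuine overtwisted contact structure agreeing with the given one on $M\times D^2$. The point is that the target $S^2\times S^3$ is fixed from the outset and the contact structure is produced by the $h$-principle, not by an open book; this is what makes the argument uniform in $(M,\xi)$.
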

\begin{remark}
At the moment the authors do not know if there are embeddings of oriented 3--manifolds into $S^2\times S^3$ with non-trivial normal bundles. 
\end{remark}
One would still like an embedding theorem with a ``nicer" or more ``standard" contact structure. To this end we ask:
\begin{question}
Is there a symplectically fillable contact structure on $S^2\times S^3$ into which all contact $3$--manifolds embed?
\end{question}
While we expect the answer to this question is yes, we can prove an analogous result for the unique non-trivial $S^3$-bundle over $S^2$. (Note that since $\pi_1(\mathrm{SO}(4))=\mathbb{Z}_2$, there are exactly two $S^3$-bundles over $S^2$). 
\begin{theorem}\label{mainembedthm}
	There is a Stein fillable contact structure $\xi$ on the (unique) twisted $S^3$-bundle over $S^2$ into which all contact $3$--manifolds embed. 
\end{theorem}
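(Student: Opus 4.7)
The plan is to realize $S^3\tilde\times S^2$ as the contact boundary of an explicit Stein $6$-manifold and then to embed any contact $3$-manifold inside using a ``spinning'' construction coming from open books and Lefschetz fibrations. First I would fix the target Stein filling $W$: set $W = D^4\tilde\times S^2$, the twisted $D^4$-bundle over $S^2$, viewed as a Weinstein domain obtained from $D^6$ by attaching a single index-$2$ Weinstein handle along an isotropic unknot in $(S^5,\xi_{std})$ with the non-trivial framing coming from the generator of $\pi_1(SO(4))=\Z/2$. This gives $\partial W = S^3\tilde\times S^2$ a Stein fillable contact structure $\xi$.

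For the embedding, given $(M,\xi_M)$, I would invoke Giroux's correspondence to choose a compatible open book $(F,\phi)$ with $\phi=\tau_{c_1}\cdots\tau_{c_k}$ a product of positive Dehn twists. The core idea is to realize this open book as a ``sub-object'' of a Weinstein Lefschetz fibration on $W$ and its supporting open book on $\partial W$. Concretely, one would equip $W$ with a Lefschetz fibration whose $4$-dimensional page $P$ contains $F$ as a $2$-dimensional submanifold, and whose Dehn--Seidel monodromy along Lagrangian $3$-spheres $L_i\subset P$ restricts on $F$ to the product of surface Dehn twists $\tau_{c_i}$ (each $L_i$ obtained by ``spinning'' $c_i\subset F$ into the normal directions of $F$ inside $P$). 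Then the $S^1$-family of $3$-dimensional slices $F\times_\phi S^1$ sits inside the $S^1$-family of $4$-dimensional pages of $\partial W$, and closing up along the binding $\partial F$ assembles to an embedding $M\hookrightarrow\partial W$. Contact compatibility follows from the two open books being compatible with their respective contact structures in the sense of Thurston--Winkelnkemper/Giroux, and triviality of the normal bundle follows from $F$ sitting inside $P$ with a trivial product normal direction.

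The main technical obstacle is to arrange the Lefschetz/open book structure flexibly enough to accommodate every possible input $(F,\phi)$ while keeping the total $6$-manifold fixed as $D^4\tilde\times S^2$. This breaks into two sub-problems: first, giving a concrete ``spinning'' recipe that converts a Dehn twist $\tau_c$ on a surface into a Dehn--Seidel twist along a Lagrangian $3$-sphere $L_c$ in an appropriate $4$-manifold page (natural candidates for the page are $F\times D^2$ or $T^*F$, with $L_c$ arising from a doubling construction that turns the closed curve $c\subset F$ into a sphere); second, verifying that the resulting product of Dehn--Seidel twists closes up to the correct monodromy over $S^2$ so that the total Weinstein $6$-manifold really is the \emph{twisted} $D^4$-bundle (and not, say, $D^4\times S^2$ or a larger handlebody). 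Handling the latter should use the stabilization freedom for the open book $(F,\phi)$ on $M$ to adjust parities, together with the cancellation/sliding of Lefschetz critical points standard in Weinstein Lefschetz fibration theory, so that after enough stabilizations the ambient total space can always be arranged to coincide with $D^4\tilde\times S^2$.
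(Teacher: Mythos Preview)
Your proposal has a concrete dimensional error and, more importantly, misidentifies where the Lefschetz fibration should live. In a Lefschetz fibration $W^6\to D^2$ the regular fiber $P$ is $4$-dimensional, so the vanishing cycles are Lagrangian \emph{$2$-spheres} in $P$, not $3$-spheres. Even after correcting this, the heart of your plan --- producing from each curve $c\subset F$ a Lagrangian $2$-sphere $L_c\subset P$ so that the Dehn--Seidel twist along $L_c$ preserves $F$ setwise and restricts to $\tau_c$ --- is not a standard construction and you give no mechanism for it (neither $F\times D^2$ nor $T^*F$ contains any Lagrangian $2$-spheres at all, since both are exact and aspherical). Your second sub-problem is equally serious: if each $\tau_{c_i}$ is to be realized by a separate critical point of a $6$-dimensional Lefschetz fibration, then the number of critical points, and hence the diffeomorphism type of $W$, varies with $(M,\xi_M)$; ``stabilization and cancellation'' does not obviously reduce an arbitrary such $W$ to the fixed twisted $D^4$-bundle.

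The paper's argument avoids all of this by putting the Lefschetz fibration one dimension down. One takes the open book on $S^2\widetilde\times S^3$ with $4$-dimensional page $X$ equal to the $D^2$-bundle over $S^2$ of Euler number $-3$ and \emph{trivial} monodromy; the Stein filling is then simply $X\times D^2$. The key input is that $X$ itself carries a (fixed, $4$-dimensional) Lefschetz fibration $f:X\to D^2$ with fiber $\Sigma$ of any genus $g\geq 3$ and whose vanishing cycles $\gamma_1,\ldots,\gamma_{2g+1}$ generate the full mapping class group of $\Sigma$. Given $(M,\xi)$ supported by $(\Sigma,\phi)$, one writes $\phi$ as a word in Dehn twists about the $\gamma_i$ and realizes it by an immersed path $\gamma$ in the base $D^2$ of $f$ that winds (in either direction) around the appropriate critical values. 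The map $(p,t)\mapsto (i(p,t),t)$ from $\Sigma\times[0,1]$ into $X\times[0,1]$, with $i$ tracking the fiber over $\gamma(t)$, then descends to an embedding of mapping tori and extends over the bindings. Because the fibers of $f$ are symplectic in $X$ and the $5$-dimensional contact form is $k\,dt+\beta$ with $\beta$ the Liouville form on $X$, the pullback is automatically a contact form supported by the same open book $(\Sigma,\phi)$, hence isotopic to $\xi$. The crucial point you are missing is that the ambient open book and Lefschetz fibration are \emph{fixed once and for all}; only the path $\gamma$ in the base disk changes with $M$.
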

Our main approach to this theorem is via open book decompositions and Lefschetz fibrations. In particular, we explore contact ``spun embeddings" (see Section~\ref{mainembed}) where one embeds one manifold into another by embedding the pages of an open book for one into the pages for the other. Such embeddings have been studied under the name of spun knots \cite{Friedman05}, and they have even been studied in the context of contact geometry. Specifically by Mori (in \cite{Mori04} and unpublished work) and a few observations were made about spun embeddings in \cite{EtnyreFurukawa17} and how they relate to braided embeddings. For more recent results also see \cite{PancholiPanditSaha}.
\smallskip

\noindent
{\bf Acknowledgments:} We are grateful to Patrick Massot and the referee for many helpful suggestions. The first author was partially supported by NSF grant DMS-1608684. The second author was partially supported by the Royal Society (URF) and the NSF
grant DMS-1509141.

 \section{Lefschetz fibrations and open book decompositions}
In this section we recall the notion of an open book decomposition and its relation to contact structures on manifolds. We then recall some basic facts about Lefschetz fibrations and symplectic manifolds. We end this section by reviewing overtwisted contact structures in high dimensions. 

\subsection{Open book decompositions}
Given a manifold with boundary $X$ and a diffeomorphism $\phi:X\to X$ whose support is contained in the interior of the manifold one can consider the mapping torus 
\[
T_\phi= X\times [0,1]/\sim
\]
where $\sim$ is the relation $(x,0)\sim (\phi(x),1)$. It is easy to see that $\partial
T_\phi=(\partial X)\times S^1$ and thus there is an obvious way to glue $(\partial X)\times D^2$ to
$T_\phi$ to obtain a manifold, which we denote by $M_{(X,\phi)}$. We say that a manifold $M$ has an \dfn{open book decomposition} $(X,\phi)$ if $M$ is diffeomorphic to $M_{(X,\phi)}$. We call $\phi$ the \dfn{monodromy} of the open book. 

Notice that the manifold $B=(\partial X)\times \{0\}$, where $0$ is the center of $D^2$, is a sub-manifold of $M_{(X,\phi)}$ with neighborhood $N=B \times D^2$ and that $M_{(X,\phi)}-B$ fibers over the circle
\[
\pi: (M_{(X,\phi)}-B)\to S^1
\]
 so that the fibration on $N-B$ is simply projection to the $\theta$ coordinate of $D^2$ (where $D^2$ is given polar coordinates $(r,\theta)$). So the image of $B$ in $M$ also has these properties, we denote this submanifold of $M$ by $B$ too, and the fibration of its complement by $\pi$. We call $(B,\pi)$ an \dfn{open book decomposition} of $M$ too. Notice that, up to diffeomorphism, $(B,\pi)$ and $(X,\phi)$ each determine the other, so we can describe open books using either definition depending on the situation. We call $B$ the \dfn{binding} of the open book and $\overline{\pi^{-1}(\theta)}$ a \dfn{page} of the open book for any $\theta\in S^1$. 
 \begin{example}\label{idmonodromy}
 As a simple example consider any manifold $X$ and $\phi=id_X$. One may easily see that $M_{(X,\phi)}$ is diffeomorphic to $X\times [0,1/2]$ glued to $X\times [1/2,1]$ in the obvious way. (That is, $X\times \{0\}$ is glued to $X\times \{1\}$ by the identity map, the two copies of $X\times \{1/2\}$ are glued by the identity map, and $(\partial X)\times [0,1/2]$ is glued to $(\partial X)\times [1/2,1]$ by the map $(x,t)\mapsto (x,1-t)$.) But of course this is simply the double $D(X\times [0,1])$ of $X\times [0,1]$. 
 
As a concrete example, consider $X$ the $D^2$-bundle over $S^2$ with Euler number $n$. Then $X\times [0,1]$ is the $D^3$-bundle over $S^2$ with Euler number $n\mod 2$ (there are only two such bundles determined by the residue mod $2$). The double of $S^2\times D^3$ is clearly $S^2\times S^3$ and the double of the twisted $D^3$-bundle over $S^2$ is clearly the twisted $S^3$-bundle over $S^2$, $S^2\widetilde\times S^3$.
 \end{example}

The fundamental connection between open books and contact structures is given in the following theorem.
\begin{theorem}[Thurston-Winkelnkemper 1975, \cite{ThurstonWinkelnkemper75} for $n=1$ and Giroux 2002, \cite{Giroux02} for arbitrary $n$]\label{obsupport}
Given a compact $2n$--manifold $X$, a diffeomorphism $\phi:X\to X$ with support contained in the interior of $X$ and a $1$-form $\beta$ on $X$ such that
\begin{enumerate}
\item $d\beta$ is a symplectic form on $X$,
\item the Liouville vector field $v$ defined by 
\[
\iota_v d\beta=\beta
\]
points out of $\partial X$, and
\item $\phi^*d\beta=d\beta$,
\end{enumerate}
then $M_{(X,\phi)}$ admits a unique, up to isotopy, contact structure $\xi_{(X,\phi)}$ whose defining $1$-form $\alpha$ satisfies 
\begin{enumerate}
\item $\alpha$ is a positive contact from on the binding of the open book and
\item $d\alpha$ is a symplectic form when restricted to each page of the open book. 
\end{enumerate}
\end{theorem}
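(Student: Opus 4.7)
The plan is to build $\alpha$ in two pieces---first a contact form on the mapping torus $T_\phi$, then an extension across the binding neighborhood $\partial X\times D^2$---and then to deduce uniqueness up to isotopy from a Gray stability argument. First I would isotope $\phi$ so that it is the identity on a collar of $\partial X$; this is permissible since $\phi$ is already supported in the interior. Choose a smooth cutoff $h\colon[0,1]\to[0,1]$ vanishing near $0$ and equal to $1$ near $1$, and set $\beta_t := (1-h(t))\beta + h(t)\phi^*\beta$ for $t\in[0,1]$, extended to $t\in\R$ via $\beta_{t+1}=\phi^*\beta_t$. Hypothesis (3) gives $d\beta_t\equiv d\beta$ for all $t$, so each $\beta_t$ is a Liouville primitive for the fixed symplectic form $d\beta$. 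The $1$-form $\alpha_K := \beta_t + K\,dt$ on $X\times\R$ is invariant under $(x,t)\mapsto(\phi(x),t+1)$ and hence descends to $T_\phi$. A direct computation yields
\[
\alpha_K \wedge (d\alpha_K)^n \;=\; dt \wedge \bigl[\,K(d\beta)^n \;-\; n\,\beta_t\wedge\dot\beta_t\wedge(d\beta)^{n-1}\,\bigr],
\]
and since $(d\beta)^n$ is a positive volume form on the compact page $X$ while the correction term is bounded uniformly in $t$, for $K$ large enough $\alpha_K$ is contact on $T_\phi$ with $d\alpha_K$ restricting to $d\beta$ on each page.

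To extend $\alpha_K$ across the binding neighborhood, use the Liouville flow of $v$ to obtain collar coordinates $(p,s)\in\partial X\times(-\epsilon,0]$ on $X$ in which $\beta=e^s\beta_\partial$ where $\beta_\partial:=\beta|_{\partial X}$ is a positive contact form on the binding. Since $\phi$ is trivial near the collar, the mapping torus form reads $\alpha_K = e^s\beta_\partial + K\,dt$ near $\partial T_\phi$. On $\partial X\times D^2$ with polar coordinates $(\rho,\theta)$, identified along the boundary by $s=\rho-1$ and $\theta=t$, take
\[
\alpha \;=\; f(\rho)\,\beta_\partial \;+\; g(\rho)\,d\theta,
\]
with $f,g\colon[0,1]\to\R$ smooth, $f(\rho)=e^{\rho-1}$ and $g(\rho)\equiv K$ near $\rho=1$, and $f\equiv 1$ and $g(\rho)=\rho^2$ near $\rho=0$. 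The standard calculation gives
\[
\alpha\wedge(d\alpha)^n \;=\; n\,f^{n-1}(fg'-gf')\,\beta_\partial\wedge(d\beta_\partial)^{n-1}\wedge d\rho\wedge d\theta,
\]
which is positive on $\{\rho>0\}$ provided $f>0$ and $fg'-gf'>0$ there; with $fg'-gf' = 2\rho+O(\rho^2)$ near the binding and the Cartesian identity $\rho\,d\rho\wedge d\theta=dx\wedge dy$, this volume form extends smoothly and nondegenerately across $\rho=0$. Such $f,g$ are produced by any monotone interpolation, and the resulting global $\alpha$ satisfies (1) and (2) by construction.

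For the uniqueness clause, the data $(h,K,f,g)$ parametrizing the construction lies in a contractible space (once $K$ is chosen large enough), so any two contact forms produced by the construction can be joined by a smooth path of contact forms satisfying (1) and (2), and Gray's stability theorem then yields an ambient isotopy carrying one kernel to the other. An arbitrary contact form satisfying (1) and (2) can be reduced to this normal form by a Moser-type interpolation on the pages (using convexity within a fixed cohomology class of symplectic forms) together with a standard straightening of $\alpha$ near the binding. The main technical subtlety throughout is the delicate compatibility at the binding: smoothness of $g(\rho)\,d\theta$ forces $g=O(\rho^2)$, which makes $\alpha\wedge(d\alpha)^n$ acquire an apparent zero at $\rho=0$ that must be exactly canceled by the Cartesian Jacobian $\rho\,d\rho\wedge d\theta=dx\wedge dy$. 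Arranging this cancellation uniformly in $n$ is the principal obstacle, and is the step where Giroux's general-dimensional statement requires more care than the original Thurston--Winkelnkemper construction for $n=1$.
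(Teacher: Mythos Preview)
Your construction is sound but takes a different route on the mapping torus than the paper. Rather than interpolating $\beta_t$ and invoking a large parameter, the paper first isotopes $\phi$ to $\phi'=\phi\circ\psi_1$, where $\psi_t$ is the flow of the vector field $\eta$ satisfying $\iota_\eta d\beta=\beta-\phi^*\beta$, so that $(\phi')^*\beta-\beta$ becomes \emph{exact}, say equal to $dh$. Then $dt+\beta$ on $X\times\R$ is invariant under the $\Z$-action $(x,t)\mapsto(\phi'(x),t+h(x))$ and descends to a contact form on $T_{\phi'}\cong T_\phi$ with no large constant needed. Your linear interpolation with $K\gg 0$ is equally valid in all dimensions (hypothesis~(3) forces $d\beta_t\equiv d\beta$, so each $\beta_t$ is a primitive of the fixed symplectic form); indeed the paper notes this alternative in the remark following the proof, though only for $n=1$. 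The paper's route makes the exact-symplectomorphism reduction explicit and avoids any estimate, while yours avoids the auxiliary isotopy of $\phi$ at the cost of the large-$K$ argument. The binding extension via $f(\rho)\beta_\partial+g(\rho)\,d\theta$ and the uniqueness sketch match the paper's treatment (the paper leaves uniqueness to the reader; your Gray-stability outline is the intended argument).

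Two minor slips to correct, neither structural: (i) the extension $\beta_{t+1}=\phi^*\beta_t$ is \emph{not} invariant under $(x,t)\mapsto(\phi(x),t+1)$; that action forces $\beta_{t+1}=(\phi^{-1})^*\beta_t$, so one of the two must be inverted. (ii) The collar identification must reverse the radial direction, e.g.\ $s=1-\rho$ rather than $s=\rho-1$: with your sign one gets $fg'-gf'=-Ke^{\rho-1}<0$ near $\rho=1$, violating the contact condition there. With $s=1-\rho$ the computation gives $+Ke^{1-\rho}>0$ and the interpolation goes through.
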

The contact structure guaranteed by the theorem is said to be \dfn{supported by} or \dfn{compatible with} the open book. This terminology is due to Giroux \cite{Giroux02} and the uniqueness part of the theorem, even in dimension $3$, was established by Giroux. 

As we will need the details of the proof for our work we sketch Giroux's proof of this theorem here.
\begin{proof}[Sketch of proof]
We begin by assuming that $\phi^*\beta=\beta -dh$ for some function $h:X\to \R$. Of course, by adding a constant we can assume that $h(x)$ is bounded above by a large negative constant. We notice that $\widetilde{\alpha}=dt+\beta$ is a contact form on $X\times \R$ where $\R$ has coordinate $t$. There is an action of $\Z$ on $X\times \R$ generated by the diffeomorphism $(x,t)\mapsto (\phi(x),t+h(x))$. Clearly $\widetilde{\alpha}$ is invariant under this action and descends to a contact form $\alpha$ on $(X\times \R)/\Z$ which is canonically diffeomorphic to $T_\phi$. One may easily find a non-decreasing function $f(r)$ that is equal to $r^2$ near $0$, and a positive but non-increasing function $g(r)$ such that the contact form 
	\[ g(r) \beta|_{\partial X} + f(r)\, d\theta \] on $(\partial X)\times D^2$, where $D^2$ is given polar coordinates, can be glued to the contact from $\alpha$ on $T_\phi$ to get a contact form on $M_{(X,\phi)}$, {\em cf.\ }\cite{Geiges08}.

Now if $\phi^*\beta-\beta$ is not exact then let $\eta$ be the vector field on $X$ satisfying
\[
\iota_\eta d\beta=\beta-\phi^*\beta,
\]
and $\psi_t:X\to X$ its flow. We note for future reference that as $\phi$ is equal to the identity near $\partial X$ we know $\eta$ is zero there and $\phi_t$ is also supported on the interior of $X$. A computation shows  
that $\phi'=\phi\circ \psi_1$ satisfies $(\phi')^*\beta=\beta+dh$ and of course $M_{(X,\phi)}$ is diffeomorphic to $M_{(X,\phi')}$ since $\phi$ is isotopic to $\phi'$. 

One may easily see that the constructed contact structure is compatible with the open book. As the details are not needed here, we leave the uniqueness of the compatible contact structure to the reader.   For a somewhat different proof of this theorem see \cite{Giroux17}. 
\end{proof}
\begin{remark}\label{usefulcomments}
We make a few observations for use later. First notice that one may use the from $k\, dt+ \beta$, for any $k>0$, to construct the contact structure on the mapping torus part of $M_{(X,\phi)}$. In addition, notice that when the monodromy is the identity, then construction in the proof is particularly simple. 

Finally we observe that if $X$ is $2$-dimensional then we can find a 1-parameter family of 1-forms $\beta_t$ on $X$ interpolating between $\beta$ and $\phi^*\beta$. Then one easily checks that for $k$ large enough $k\, dt + \beta_t$ is a contact form on the mapping torus part of $M_{(X,\phi)}$ that can be extended over the binding as was done in the proof above.  
\end{remark}

\begin{example}\label{examplestein}
Continuing Example~\ref{idmonodromy} let $X$ be a Stein domain in the Stein manifold $X'$. Then the Stein structure on $X$ gives a canonical 1--form $\beta$ on $X$ as in Theorem~\ref{obsupport}. Let $\xi$ be the contact structure on $M_{(X,id_X)}$ supported by the open book $(X,id_X)$. From Example~\ref{idmonodromy} we know that $M_{(X,id_x)}$ is the boundary of $X\times D^2$. We note that $X\times D^2$ can be given the structure of a Stein domain (indeed on $X\times \C$ consider $\Psi(x,z)=\psi(x)+\|z\|^2$, where $\psi:X\to \R$ is a strictly pluri-subharmonic function. It is clear that $\Psi$ is also strictly pluri-subharmonic and will define a domain diffeomorphic to $X\times D^2$). It is also easy to check that the Stein domain is a filling of $(M_{(X,id_X)},\xi)$ (cf. \cite[Prop. 3.1]{DGvK}). 
\end{example}

Giroux also proved the following ``converse" to the Theorem~\ref{obsupport}.
\begin{theorem}[Giroux 2002, \cite{Giroux02}]
Every contact structure on a closed $(2n+1)$--manifold is supported by some open book decomposition. 
\end{theorem}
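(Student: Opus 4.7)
The plan is to follow the approach of Giroux and Mohsen, which extends Donaldson's approximately holomorphic methods from the symplectic to the contact category. First I would fix a contact form $\alpha$ with $\xi = \ker\alpha$, an almost complex structure $J$ on $\xi$ compatible with $d\alpha$, and the associated Reeb vector field $R_\alpha$. Treating $d\alpha|_\xi$ as (approximate) curvature, I would form a Hermitian line bundle $(L,\nabla)$ with curvature $-i\,d\alpha$ and study its tensor powers $L^{\otimes k}$ for $k$ large. The basic objects are sections that are $J$-holomorphic on $\xi$ to leading order and nearly $R_\alpha$-invariant; Donaldson's Gaussian peak sections, adapted to the contact setting, provide a rich local supply.

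Next, I would produce a pair $(s_0^k, s_1^k)$ of such approximately holomorphic sections by summing peak sections over a sufficiently fine net of points, and then apply quantitative transversality (in the contact form developed by Giroux--Mohsen and Ibort--Mart\'inez-Presas) to arrange the following for $k \gg 0$: (i) $Z_0^k := (s_0^k)^{-1}(0)$ is a codimension-$2$ contact submanifold of $M$; (ii) the common zero set $B^k := Z_0^k \cap Z_1^k$ is a codimension-$4$ contact submanifold; and (iii) the pencil $[s_0^k : s_1^k] \co M \setminus B^k \to \mathbb{CP}^1$ has only isolated ``Lefschetz-type'' critical points along $\xi$, and away from them its fibers are positively transverse to $\xi$ with respect to $d\alpha$. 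The angular map $\varphi^k := \arg(s_1^k / s_0^k) \co M \setminus B^k \to S^1$ then realizes $(B^k, \varphi^k)$ as an open book decomposition of $M$: the approximate holomorphicity forces the standard binding model $B^k \times D^2$ near $B^k$, and the critical points can be absorbed into the interior of the pages by a small isotopy supported away from the binding.

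Compatibility of $\xi$ with this open book follows from (i) and (iii): $\alpha$ restricts to a contact form on $B^k$ because $B^k$ is a contact submanifold, and $d\alpha$ is symplectic on each page because the pages are positively $d\alpha$-transverse to $\xi$, hence inherit a symplectic form matching the hypothesis of Theorem~\ref{obsupport}. The main obstacle is the quantitative transversality of the second step. One must carry out Donaldson's induction on a cover of $M$ by Darboux balls of radius on the order of $k^{-1/2}$, while simultaneously controlling the non-integrability of $J$ on $\xi$ and the Reeb direction, which is transverse to $\xi$ and along which the sections are only approximately invariant. The delicate point is to ensure that the perturbation needed to upgrade ``approximately transverse'' to genuinely transverse can be performed without destroying the global approximate holomorphicity; once this quantitative transversality is in place, the construction of the open book and the verification of compatibility reduce to local normal-form computations near the critical set and near the binding.
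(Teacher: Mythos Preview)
The paper does not prove this theorem; it is simply quoted from \cite{Giroux02} as background, with no argument or sketch provided. There is therefore no ``paper's own proof'' to compare your proposal against.

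For what it is worth, your outline is essentially the Giroux--Mohsen strategy, which is indeed how the result is established in dimensions $\geq 5$. Two comments. First, in dimension $3$ Giroux's original argument is quite different and more elementary: it builds the open book from a contact cell decomposition (equivalently, via convex surface theory), not from approximately holomorphic sections. Second, your step~(iii) imports a feature of Donaldson's \emph{symplectic} Lefschetz pencils that does not occur in the contact version: the map $\arg(s_1^k/s_0^k)\co M\setminus B^k\to S^1$ that Giroux--Mohsen produce is a genuine submersion, i.e.\ an honest open book fibration, with no Lefschetz critical points to ``absorb into the pages.'' The extra Reeb direction is exactly what lets the quantitative transversality eliminate critical points of the angular map altogether rather than merely force them to be nondegenerate. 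So that portion of your plan is unnecessary, though not fatal to the overall approach.
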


\subsection{Lefschetz fibrations}

A \dfn{Lefschetz fibration} of an oriented 4--manifold $X$ is a map $f\co X\to F$ where $F$ is an oriented surface and $df$ is surjective at all but finitely many points $p_1,\ldots, p_k$, called \dfn{singular points},  each of which has the following local model: each point $p_i$ has a neighborhood $U_i$ that is orientation preserving diffeomorphic to an open set $U_i'$ in $\C^2$, $f(p_i)$ has a neighborhood $V_i$ that is orientation preserving diffeomorphic to $V_i'$ in $\C$ and in these local coordinates $f$ is expressed as the map $(z_1,z_2)\mapsto z_1z_2$. We say $f\co:X\to F$ is an \dfn{achiral Lefschetz fibration} if there is a map $f\co X\to F$ as above except the local charts expressing $f$ as $(z_1,z_2)\mapsto z_1z_2$ do not have to be orientation preserving. 

We state a few well known facts about Lefschetz fibrations, see for example \cite{OzbagciStipsicz04}. Let $f\co X\to F$ be a Lefschetz fibration with $\{p_1,\ldots, p_k\}\subset X$ the set of singular points. 
\begin{enumerate}
\item Setting $F'=F\setminus f(\{p_1,\ldots, p_k\})$ and $X'=f^{-1}(F')$ the map $f|_{X'}\co X'\to F'$ is a fibration with fiber some surface $\Sigma$. 

\item Fix a point $x\in F'$ and for each $i=1,\ldots, k$, let $\gamma_i$ be a path in $F$ from $x$ to $f(p_i)$ whose interior is in $F'$. Then there is an embedded curve $v_i$ on $F_x=f^{-1}(x)$ that is homologically non-trivial in $F_x$ but is trivial in the homology of $f^{-1}(\gamma_i)$. The curve $v_i$ is called the \dfn{vanishing cycle of $p_i$} (though it also depends on the arc $\gamma_i$). We will assume that $\gamma_i\cap \gamma_j=\{x\}$ for all $i\not=j$. 

\item For each $i$ let $D_i$ be a disk in $F$ containing $f(p_i)$ in its interior, disjoint from the $\gamma_j$ for $j\not=i$, and intersecting $\gamma_i$ in a single arc that is transverse to $\partial D_i$. The boundary $\partial f^{-1}(D_i)$ is a $\Sigma$-bundle over $S^1$. Identifying a fiber of $ f^{-1}(\partial D_i)$ with $\Sigma$ using $\gamma_i$, the monodromy of $ f^{-1}(\partial D_i)$ is given by a right-handed Dehn twist about $v_i$. 

\item If the surface $F$ is the disk $D^2$ then let $D$ be a disk containing $x$ and intersecting each $\gamma_i$ in an arc transverse to $\partial D$. The manifold $X$ can be built from $D^2\times \Sigma=f^{-1}(D)$ by adding a $2$--handle for each $p_i$ along $v_i$ sitting in $F_{q_i}=f^{-1}(q_i)$ where $q_i=\partial D\cap \gamma_i$ with framing one less than the framing of $v_i$ given by $F_{q_i}$ in $\partial D^2\times \Sigma$. Conversely, any 4-manifold constructed from $D^2\times \Sigma$ by attaching $2$--handles in this way will correspond to a Lefschetz fibration.

\item If $f\co X\to F$ is an achiral Lefschetz fibration then the above statements are still true but for an ``achiral" singular point the monodromy Dehn twist is left-handed and the $2$--handle is added with framing one greater than the surface framing. 
\end{enumerate}

The following theorem is well-known and provides a way to study symplectic/contact topology in low-dimensions via Lefschetz fibrations, in turn, via the theory of mapping class groups of surfaces.

\begin{theorem}\label{leftosym}
Suppose that $X$ is a 4--manifold that admits a Lefschetz fibration $f:X\to D^2$. If the fiber of $f$ is not null-homologous then $X$ admits a symplectic structure $\omega$ in which each fiber of $f$ is symplectic. If moreover, the fibers of $f$ are surfaces with boundary, and the vanishing cycles are non-separating, $\omega$ can be taken to be exact and $(X,\omega=d\beta)$ a strong filling of a contact structure $\xi$ on $\partial X$. 

There is an open book $(B,\pi)$ for $\partial X$ supporting $\xi$ that can be described as follows:
\begin{enumerate}
\item We may assume $D^2$ is the unit disk in $\C$ and $0\in D^2$ is a regular value of $f$. The binding $B$ is $\partial f^{-1}(0)$. 
\item The projection $\pi:(\partial X-B)\to S^1$ is simply the composition of $f$ restricted to $\partial X-B$ and projection to the $\theta$ coordinate of $D^2$ (where $D^2$ is given polar coordinates $(r,\theta)$). 
\item There is a subdisk $D'$ of the base $D^2$ containing $0$ and all the critical points of $f$ such that $f^{-1}(D') \cap \partial X$ is a neighborhood of the binding and if $v$ is the Liouville field for $\omega=d\beta$ then the contact form $\alpha=\iota_v\omega$ is given by $d\phi +r^2\, d\theta$ on each component of $f^{-1}(D')$.
\end{enumerate}
\end{theorem}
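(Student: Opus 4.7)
The plan is to follow a Thurston--Gompf style construction adapted to the Lefschetz setting, then read off the open book on $\partial X$ directly from the geometry of $f$.

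For the existence of $\omega$, I would build a closed $2$-form $\eta$ on $X$ that restricts to a symplectic form on each smooth fiber. On the smooth locus $X' = f^{-1}(D^{2} \setminus f(\{p_{1},\ldots,p_{k}\}))$ we have a genuine $\Sigma$-bundle; pick a smoothly varying family of area forms on the fibers. Near each critical point use the standard K\"ahler form coming from the local holomorphic model $(z_{1},z_{2}) \mapsto z_{1}z_{2}$. These pieces patch together via a partition of unity once the fiberwise forms are arranged to agree with the standard K\"ahler form on a small neighborhood of each critical point; the hypothesis that the fiber is not null-homologous is what ensures the resulting cohomology class can actually be represented by such a closed form on all of $X$. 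Next, set $\omega = \eta + K\, f^{*}\omega_{0}$ with $\omega_{0}$ an area form on $D^{2}$ and $K$ large. A direct computation gives $\omega^{2} > 0$, since vertical tangent directions pair with $\eta$ and horizontal directions pair with $f^{*}\omega_{0}$; both summands are closed, so $\omega$ is symplectic and each fiber of $f$ is symplectic.

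For the exact filling statement, suppose now that fibers have boundary and all vanishing cycles are non-separating. Each fiber is then a Liouville surface, so choose Liouville primitives $\beta_{\mathrm{fib}}$ smoothly on each fiber. The monodromy around each critical value is a right-handed Dehn twist about a non-separating simple closed curve, and a symplectic Dehn twist preserves the Liouville form up to an exact $1$-form; the non-separating hypothesis is essential here, since a separating vanishing cycle would obstruct such an adjustment by a cohomological count on the fiber. Combining this with the exact local K\"ahler model near each critical point, one assembles a global exact primitive $\beta$ with $\omega = d\beta$ whose Liouville vector field $v$ points outward along $\partial X$, so $(X,\omega=d\beta)$ is a strong filling of $(\partial X,\xi)$ with $\xi = \ker\alpha$ and $\alpha = \iota_{v}\omega|_{\partial X}$.

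For the open book description, after smoothing corners $\partial X$ decomposes as a vertical piece $\bigcup_{z\in D^{2}} \partial f^{-1}(z)$ glued to the horizontal piece over $\partial D^{2}$. Taking $B = \partial f^{-1}(0)$, the map $\pi$ obtained by composing $f|_{\partial X\setminus B}$ with the angular projection $(r,\theta)\mapsto \theta$ realizes $\partial X \setminus B$ as a fibration over $S^{1}$ whose fibers are copies of the Lefschetz fiber. Arranging the local form of $\beta$ near $B$ so that $\alpha = d\phi + r^{2}\,d\theta$ on each component of $f^{-1}(D')$ is a direct computation in the standard neighborhood of the binding. Compatibility of $\xi$ with this open book in the sense of Theorem~\ref{obsupport} is then immediate: $\alpha$ restricts to a positive contact form on $B$, and $d\alpha$ is a symplectic form on each page since the pages are symplectic for $\omega$. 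The main obstacle is the construction of the global Liouville primitive $\beta$: picking primitives fiber by fiber is easy, but making them assemble into an exact $1$-form on the total space demands that the monodromy preserve the chosen fiberwise primitives up to exact terms, and this is precisely the algebraic content of the non-separating hypothesis on the vanishing cycles.
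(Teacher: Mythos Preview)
Your Thurston--Gompf outline for the existence of a symplectic form matches the paper's first reference (to Gompf, building on Thurston), and your description of the open book on $\partial X$ is in line with how the paper reads it off from Seidel's definition of an exact Lefschetz fibration.

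The exact--filling step, however, has a genuine gap, and your diagnosis of where the non-separating hypothesis enters is essentially inverted. You assert that a symplectic Dehn twist preserves the Liouville form up to an exact $1$--form and that separating curves would obstruct this ``by a cohomological count on the fiber.'' In fact the opposite holds: if $\gamma\subset(\Sigma,d\beta)$ is \emph{separating}, every class in $H_1(\Sigma)$ is represented by a curve disjoint from $\gamma$, so $[\tau_\gamma^*\beta-\beta]=0$ automatically; whereas if $\gamma$ is \emph{non-separating}, one computes that $[\tau_\gamma^*\beta-\beta]$ is a nonzero multiple of the Poincar\'e dual of $[\gamma]$ unless $\int_\gamma\beta=0$. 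So the flux argument you sketch does not single out the non-separating case, and ``assembling fiberwise primitives'' does not by itself produce a global primitive of $\omega$.

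The paper instead cites Seidel: the point is that the vanishing cycles must be \emph{exact Lagrangian} circles in the fiber for the standard construction of an exact Lefschetz fibration to go through (this is what makes the Lefschetz thimbles exact Lagrangian disks and yields a global Liouville form with the boundary behavior in item~(3)). A simple closed curve $\gamma$ on $(\Sigma,d\beta)$ can be isotoped to be exact, i.e.\ $\int_\gamma\beta=0$, if and only if it is non-separating; for a separating curve Stokes' theorem gives $\int_\gamma\beta=\pm\operatorname{Area}(\text{one side})\neq 0$. That is the correct content of the hypothesis, and once it is in place Seidel's Lemma~16.8 builds the exact symplectic structure and the normal form near the horizontal boundary that you then use.
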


\begin{proof}[Sketch of proof] In the case when the fibers are closed, and there exists a class in $H^2(X)$ that integrates to a positive number along the fiber, the existence of a symplectic form is due to Gompf \cite{gompf}, which in turn relies on a classical argument of Thurston \cite{thurston}. 
	The case when the fibers are surfaces with boundary (or more generally are exact symplectic manifolds) and the vanishing cycles are non-separating (or more generally exact Lagrangian spheres) is studied extensively by Seidel in \cite[Chapter~3]{seidel}. Note that on a punctured surface equipped with an exact symplectic form, we can isotope any non-separating curve to an exact Lagrangian (unique up to Hamiltonian isotopy), then by \cite[Lemma~16.8]{seidel}, we can construct an exact symplectic structure on $X$ and an exact Lefschetz fibration $f$ on $X$. The properties (1)-(3) listed above are consequences of the definition of an exact Lefschetz fibration given in \cite[Section~15(a)]{seidel}. In particular, for the property (3) see \cite[Remark~15.2]{seidel}, where the triviality of the symplectic connection along the horizontal boundary can be arranged in our case since the base of our exact Lefschetz fibration, being $D^2$, is contractible.
\end{proof}

\begin{proposition}\label{exampleprop}
Let $\Sigma$ be a surface of genus $g\geq 3$ and $\gamma_1,\ldots, \gamma_{2g+1}$ the curves shown
in Figure~\ref{fig:surface}. Then the disk bundle over $S^2$ with Euler number $-3$ has a Lefschetz
fibration with vanishing cycles $\{\gamma_1,\ldots, \gamma_{2g}, \gamma_{2g+1}\}$, and the disk bundle over $S^2$ with Euler number $-4$ has a Lefschetz fibration with vanishing cycles $\{\gamma_1,\ldots, \gamma_{2g}, \gamma_{2g+2}\}$. The Lefschetz fibration gives
these manifolds an exact symplectic structure that is independent of the genus of $\Sigma$. Similarly $D^4$ has a Lefschetz
fibration with vanishing cycles $\{\gamma_1,\ldots, \gamma_{2g}\}$. 
\end{proposition}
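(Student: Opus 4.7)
The plan is to construct each of the claimed Lefschetz fibrations by explicit handle attachments to $\Sigma\times D^2$, and then identify the total space via handle cancellations and Kirby calculus. By the construction preceding Theorem~\ref{leftosym}, a Lefschetz fibration $X\to D^2$ with fiber $\Sigma$ and vanishing cycles $c_1,\ldots,c_k$ is obtained from $\Sigma\times D^2$ by attaching a 2-handle along a copy of each $c_i$ in a distinct page of $\partial(\Sigma\times D^2)$, with framing equal to the page framing minus one. The surface $\Sigma$ (genus $g$ with one boundary component) admits a handle decomposition with one 0-handle and $2g$ 1-handles, so $\Sigma\times D^2$ inherits a 4-dimensional handle decomposition with one 0-handle and $2g$ 1-handles. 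The curves $\gamma_1,\ldots,\gamma_{2g}$ shown in Figure~\ref{fig:surface} will be arranged as a standard system of generators for $H_1(\Sigma)$, so that $\gamma_i$ runs geometrically exactly once over the cocore of a distinct 1-handle of $\Sigma$.

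With this setup, each of the $2g$ two-handles attached along $\gamma_1,\ldots,\gamma_{2g}$ forms a canceling pair with a unique 1-handle of $\Sigma\times D^2$. Performing all $2g$ cancellations simultaneously leaves only the 0-handle, so the Lefschetz fibration with vanishing cycles $\{\gamma_1,\ldots,\gamma_{2g}\}$ has total space diffeomorphic to $D^4$, which proves the last assertion of the proposition. For the $-3$ and $-4$ cases, after the same $2g$ handle cancellations we are left with $D^4$ together with a single extra 2-handle attached along the image of $\gamma_{2g+1}$ (respectively $\gamma_{2g+2}$) in $S^3=\partial D^4$. Direct inspection of the figure shows that these images are unknots; the attaching framings are computed from the page framings of $\gamma_{2g+1}$ and $\gamma_{2g+2}$ inside $\Sigma$ (minus one from the Lefschetz correction), and a short local calculation shows that these framings equal $-3$ and $-4$ respectively. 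Hence the two total spaces are the $D^2$-bundles over $S^2$ of Euler numbers $-3$ and $-4$.

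For independence from the genus of $\Sigma$: increasing $g$ by one corresponds to a positive stabilization of the underlying open book (add a 1-handle to the page and compose the monodromy with a right-handed Dehn twist about a curve running once over this 1-handle). On the Lefschetz fibration this adjoins a 1-handle to $\Sigma\times D^2$ together with a canceling 2-handle along the new vanishing cycle. Since these two handles form a canceling pair, the exact symplectic 4-manifold produced by Theorem~\ref{leftosym} is unchanged (up to exact symplectomorphism) by the stabilization. Consequently each of the three Lefschetz fibrations in the statement defines an exact symplectic 4-manifold that depends only on the list of vanishing cycles and not on $g\geq 3$.

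The main technical step, and the only nontrivial one, is the framing computation for $\gamma_{2g+1}$ and $\gamma_{2g+2}$ after the $2g$ cancellations: one must keep track of the handle slides forced by cancellations that interact with the supplementary curve and verify that the resulting attaching circle is the unknot with the asserted framing. Because the curves $\gamma_{2g+1}$ and $\gamma_{2g+2}$ are supported in a small subsurface (as pictured in Figure~\ref{fig:surface}, which is also the reason for the hypothesis $g\geq 3$), this reduces to a finite, explicit diagrammatic computation. Everything else is standard handle calculus combined with the invariance of exact Lefschetz fibrations under positive stabilization.
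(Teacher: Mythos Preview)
Your argument is correct and follows essentially the same route as the paper: both build the handle picture of the Lefschetz fibration over $D^2$, cancel the $2g$ one-handles of $\Sigma\times D^2$ against the two-handles coming from $\gamma_1,\ldots,\gamma_{2g}$, and then read off that the surviving two-handle along $\gamma_{2g+1}$ (resp.\ $\gamma_{2g+2}$) is an unknot with framing $-3$ (resp.\ $-4$). The only difference is in the independence-of-genus step: the paper converts the diagram into a Stein handlebody and invokes symplectic handle cancellation, whereas you phrase it as invariance of the exact symplectic total space under positive stabilization of the Lefschetz fibration (note that raising the genus by one is \emph{two} stabilizations, not one); these are equivalent formulations of the same cancellation phenomenon.
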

\begin{figure}[htb]
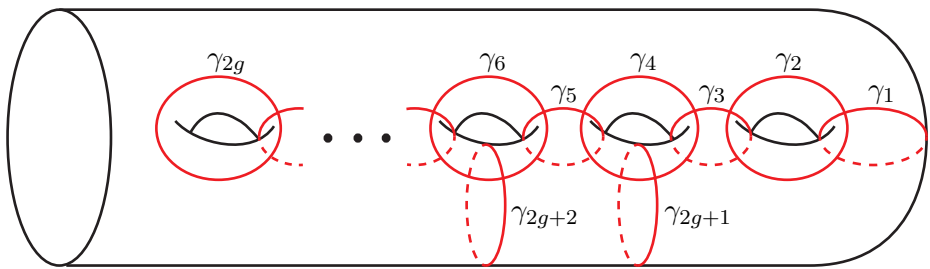
{
\begin{overpic}
{surface}
\put(326,64){$\gamma_1$}
\put(291, 76){$\gamma_2$}
\put(262,64){$\gamma_3$}
\put(236,76){$\gamma_4$}
\put(206, 64){$\gamma_5$}
\put(179, 76){$\gamma_6$}
\put(76,76){$\gamma_{2g}$}
\put(249,18){$\gamma_{2g+1}$}
\put(191,18){$\gamma_{2g+2}$}
\end{overpic}}
\caption{A surface $\Sigma$ of genus $g$ with one boundary component and $2g+2$ curves marked.}
\label{fig:surface}
\end{figure}

\begin{proof}
One may easily describe a handle presentation for the 4--manifold described by the Lefschetz
fibration in the proposition, see \cite{OzbagciStipsicz04}. The handlebody picture in
Figure~\ref{fig:stein}  is one such description when the genus is $3$.  There is an obvious
extension of this picture to the higher genus case.  The 1 and $2$--handles for higher genus
surfaces all cancel and do not interact with $\gamma_{2g+1}$ and $\gamma_{2g+2}$.

\begin{figure}[htb]
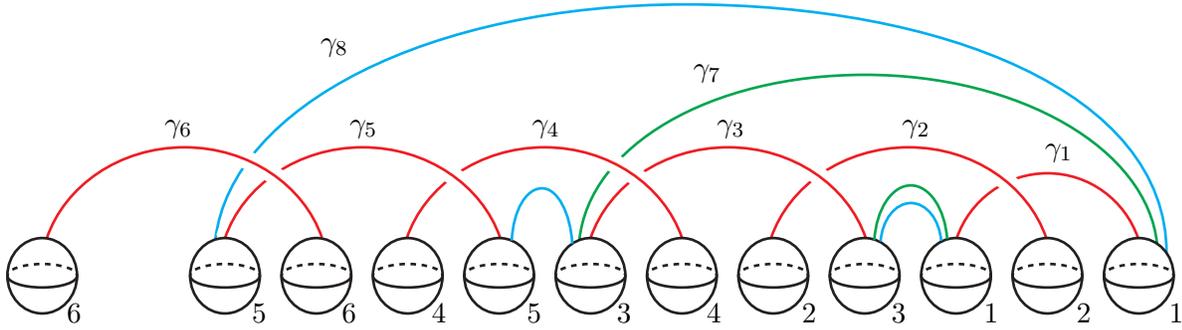
{
\begin{overpic}
{stein}
\put(440,-3){$1$}
\put(405, -3){$2$}
\put(370,-3){$1$}
\put(335,-3){$3$}
\put(301,-3){$2$}
\put(265,-3){$4$}
\put(231,-3){$3$}
\put(197,-3){$5$}
\put(161,-3){$4$}
\put(127,-3){$6$}
\put(93,-3){$5$}
\put(23,-3){$6$}
\put(260,90){$\gamma_{7}$}
\put(119,100){$\gamma_{8}$}
\put(60,69){$\gamma_6$}
\put(130,69){$\gamma_5$}
\put(199,69){$\gamma_4$}
\put(269,69){$\gamma_3$}
\put(339,69){$\gamma_2$}
\put(393,60){$\gamma_1$}
\end{overpic}}
\caption{Handle presentation describing manifolds in Proposition~\ref{exampleprop} when the genus is $3$. All $2$--handles have framing $-1$. The 1--handles are paired as indicated by the numbers.}
\label{fig:stein}
\end{figure}
Notice that all the 1 and $2$--handles cancel if we do not have the vanishing cycles $\gamma_{2g+1}$
and $\gamma_{2g+2}$. Thus the manifold is $B^4$. After this cancellation the curves $\gamma_{2g+1}$
and $\gamma_{2g+2}$ are both unknots with the former having framing $-3$ and the later having framing $-4$. Thus the first Lefschetz fibration in the theorem gives disk bundle over the sphere with Euler number $-3$ and the Lefschetz fibration with $\gamma_{2g+2}$ replacing $\gamma_{2g+1}$ results in the disk bundle over the sphere with Euler number $-4$.

It is well known how to turn these handlebody diagrams into Stein handlebody diagrams, see again \cite{OzbagciStipsicz04}, and thus our manifolds have an exact symplectic structure. As the $1$ and $2$--handles for higher genus surfaces symplectically cancel we see this structure is independent of the genus. 
\end{proof}

\subsection{Overtwisted contact structures}\label{sec:ot}
Recall an \dfn{almost contact structure} on a $(2n+1)$--dimensional manifold $M$ is a reduction of
its structure group to $U(n)\times \mathbf{1}$. This is easily seen to be a necessary condition for
a manifold to admit a contact structure. We will be mainly considering contact structures in
dimension $5$ so a reduction of the structure group corresponds to a section of the $SO(5)/U(2)$-bundle associated to the tangent bundle of $M$. It is known that $SO(5)/U(2)$ is diffeomorphic to
$\C P^3$, for this and other facts below see for example \cite{Geiges08}, and thus the only (and
primary) obstruction to the existence of an almost contact structure lives in $H^3(M;\Z)$ and turns
out to be the integral second Stiefel-Whitney class (which vanishes if and only if the ordinary
second Stiefel-Whitney has an integral lift). Moreover, two almost contact structures are homotopic
if and only if they are homotopic on the $2$--skeleton of $M$. If $H^2(M;\Z)$ has no $2$-torsion then an almost contact structure is determined up to homotopy by its first Chern class $c_1(\xi)$. 

Borman, Eliashberg, and Murphy \cite{BormanEliashbergMurphy} defined a notion of overtwisted contact
structure in all dimensions and proved a strong version of the $h$-principle for such structures, in
particular they showed that any almost contact structure can be homotoped to a unique overtwisted
contact structure. The precise definition of an overtwisted contact structure will not be needed here, but see
\cite{BormanEliashbergMurphy} for the definition and \cite{CasalsMurphyPresas} for alternate
(possibly simpler) definitions.  Here, we content ourselves with stating the main theorem that we need. 

\begin{theorem}[Borman, Eliashberg, and Murphy 2014, \cite{BormanEliashbergMurphy}]\label{otex}
Let $\eta$ be an almost contact structure on a manifold $M$ that defines a contact structure on some
neighborhood of a closed (possible empty) subset $A$. Then there is a homotopy of $\eta$, fixed on
$A$, through almost contact structures to an overtwisted contact structure $\xi$ on $M$. Moreover, any other contact structure $\xi'$ that agrees with $\xi$ on $A$ and is overtwisted in the complement of $A$ is isotopic, relative to $A$, to $\xi$. 
\end{theorem}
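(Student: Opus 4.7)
The plan is to prove this as a relative $h$-principle via induction on a handle decomposition of $M$, with a carefully chosen ``overtwisted disk'' germ serving as a reservoir of flexibility. This is the strategy of Borman--Eliashberg--Murphy.

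First I would fix once and for all a specific germ of contact structure on a $(2n+1)$-disk $D_{ot}$, the \emph{standard overtwisted disk}, designed so that its insertion into an almost contact manifold makes a broad class of contact extension problems soluble. For the existence half, choose a small open ball $B \subset M \setminus A$; since $B$ is contractible, $\eta|_B$ is homotopic rel $\partial B$ to a genuine contact structure containing an embedded copy of $D_{ot}$. Having installed an overtwisted disk in $B$, pick a handle decomposition of $M \setminus (A \cup B)$ and extend the contact structure one handle at a time. Extension across a $k$-handle reduces, after the usual isotopy-extension arguments, to the \emph{contact shell problem}: given an almost contact structure on $D^{2n+1}$ that is already a genuine contact structure near $\partial D^{2n+1}$, find a homotopy rel boundary to a contact structure on the whole disk. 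The key lemma---and the main obstacle---is that, as long as $D_{ot}$ is present somewhere in $M$, every such shell can in fact be filled.

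The contact shell lemma is the technical heart of the whole argument, and this is where I expect almost all of the work to live. Its proof should proceed by cutting the shell into simpler pieces using convex hypersurface theory (in the form of the ``bordered Legendrian open book'' technology developed in \cite{BormanEliashbergMurphy}), and by establishing a \emph{propagation} mechanism that transports the flexibility provided by $D_{ot}$ through the manifold to the precise location where the contact condition is failing, so that the obstruction can be absorbed. At the base of the recursion one reduces to a parametric form of Eliashberg's three-dimensional overtwisted $h$-principle applied on transverse disks.

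For the uniqueness statement, I would apply the parametric version of the same argument. Given two overtwisted contact structures $\xi_0,\xi_1$ on $M$ agreeing with $\eta$ on $A$, together with a homotopy between them through almost contact structures rel $A$, consider $M \times [0,1]$ and the closed subset $A \times [0,1] \cup M \times \{0,1\}$, on which a contact structure is already given. Applying the existence theorem just proved to this pair yields an almost contact structure on $M \times [0,1]$ that is honestly contact and restricts to the given data. Gray's stability theorem, applied to the resulting path of contact structures on $M$, then converts this into the desired contact isotopy rel $A$ between $\xi_0$ and $\xi_1$.
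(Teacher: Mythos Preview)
The paper does not prove this theorem at all: it is quoted as a result of Borman, Eliashberg, and Murphy, and the paragraph immediately preceding the statement says explicitly ``we content ourselves with stating the main theorem that we need.'' So there is nothing in the paper to compare your proposal against.

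That said, your outline of the existence half is a fair high-level summary of the Borman--Eliashberg--Murphy strategy. Your uniqueness argument, however, contains a genuine error. You propose to apply the existence theorem to the pair $\bigl(M\times[0,1],\, A\times[0,1]\cup M\times\{0,1\}\bigr)$, but $M\times[0,1]$ is even-dimensional and so carries no contact structure; the existence statement you just sketched does not apply to it. The correct formulation is a $1$-parametric version of the same handle-by-handle shell-filling argument carried out for a path of almost contact structures on $M$ itself (with the overtwisted disk available for every parameter value), not a single almost contact structure on the product. After that parametric filling produces a path of genuine contact structures on $M$ connecting $\xi_0$ to $\xi_1$ rel $A$, Gray stability converts it to an isotopy as you say.
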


 \section{Topological embeddings}\label{mainembed}
 
 In this section we discuss a general procedure for constructing embeddings using open book decompositions. 
 
\subsection{Smooth embeddings via open books}
The simplest way to embed one manifold into another using open book decompositions is the following. 
 \begin{lemma}\label{simplespin}
 Given two open book decompositions $(X,\psi)$ and $(Y,\phi)$ and a family of embeddings $f_t:Y\to X$, $t\in[0,1]$, such that
 \begin{enumerate}
 \item each $f_t$ is proper,
 \item $f_t$ is independent of $t$ near $\partial Y$, and
 \item $\psi\circ f_1=f_0\circ \phi$,
 \end{enumerate}
 then there is a smooth embedding of $M_{(Y,\phi)}$ into $M_{(X,\psi)}$.
 \end{lemma}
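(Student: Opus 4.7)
The plan is to construct the embedding $M_{(Y,\phi)}\hookrightarrow M_{(X,\psi)}$ piecewise, using the natural decomposition into the mapping torus part $T_\phi$ and the binding neighborhood $(\partial Y)\times D^2$ glued along $(\partial Y)\times S^1$, and then to verify that the two pieces fit together smoothly.

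On the mapping torus I will define
\[
\widetilde F\co T_\phi\to T_\psi,\qquad \widetilde F([y,t])=[f_{1-t}(y),t];
\]
the time-reversal $1-t$ is forced by the paper's convention $(x,0)\sim(\phi(x),1)$. Well-definedness of $\widetilde F$ amounts to checking that $[f_1(y),0]$ and $[f_0(\phi(y)),1]$ represent the same class in $T_\psi$, which, via the identification $(x,0)\sim(\psi(x),1)$, is equivalent to $\psi\circ f_1=f_0\circ\phi$; this is exactly hypothesis~(3). Because each $f_t$ is an embedding and $\widetilde F$ preserves the fibration $T_\phi\to S^1$, it is itself a smooth embedding.

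To extend across the binding, hypothesis~(2) provides a collar $(\partial Y)\times[0,\epsilon)$ of $\partial Y\subset Y$ on which $f_t$ is independent of $t$, equal to some fixed proper embedding $g$ landing in a corresponding collar of $\partial X\subset X$. After identifying the $S^1$ factor of the mapping torus with the angular coordinate $\theta$ of $D^2$, define the extension on $(\partial Y)\times D^2\subset M_{(Y,\phi)}$ by $(y,(r,\theta))\mapsto (g|_{\partial Y}(y),(r,\theta))\in(\partial X)\times D^2\subset M_{(X,\psi)}$. By hypothesis~(2) this agrees with $\widetilde F$ on the common boundary $(\partial Y)\times S^1$, producing a well-defined map $M_{(Y,\phi)}\to M_{(X,\psi)}$.

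The step requiring the most care is arranging smoothness at the interface between the two pieces, which comes down to aligning the radial coordinates of the collars of $\partial Y$ and $\partial X$ under $g$, and matching the angular coordinate of $D^2$ with the time coordinate of the mapping torus. Because hypothesis~(2) makes $f_t$ genuinely $t$-independent near $\partial Y$, these choices are all available. Injectivity is clear because each $f_t$ is injective and the binding piece maps into the binding neighborhood of the target, meeting the image of $T_\phi$ only along the common boundary. The result is the desired smooth embedding of $M_{(Y,\phi)}$ into $M_{(X,\psi)}$.
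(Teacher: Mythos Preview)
Your proof is correct and follows essentially the same two-piece construction as the paper: build the embedding on the mapping torus using the family $f_t$, then extend over the binding neighborhood via the constant boundary embedding. The paper writes the mapping-torus map as $(y,t)\mapsto(f_t(y),t)$; your time-reversal $(y,t)\mapsto(f_{1-t}(y),t)$ is the version that is actually compatible with the stated identification $(x,0)\sim(\phi(x),1)$ and hypothesis~(3), so your extra care there is well placed, and your discussion of smoothness at the interface is a detail the paper leaves implicit.
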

 \begin{proof}
 The last condition on $f_t$ guarantees that the embedding
 \[
 Y\times [0,1]\to X\times [0,1]: (y,t)\mapsto (f_t(y), t)
 \]
 descends to an embedding of mapping torus $T_\phi$ into $T_\psi$. The first two conditions on $f_t$ guarantee that the embedding 
 \[
 (\partial Y)\times D^2\to (\partial X)\times D^2: (y,z)\mapsto (f_0(y), z)
 \]
 extends the embedding of the mapping tori to an embedding of $M_{(Y,\phi)}$ into $M_{(X,\psi)}$.
 \end{proof}
 This operation has been extensively studied in the context of knot theory. In particular, given a properly embedded arc $c$ in $D^3$ one obtains an embedding of $S^2$ (that is the manifold with open book having page an arc and monodromy the identity) into $S^4$ obtained from the above lemma (where $f_t$ is independent of $t$) when thinking of $S^4$ as $M_{(D^3,id_{D^3})}$. If $c$ is obtained from a knot $K$ in $S^3$ by removing a small ball about a point on $K$ then knotted $S^2$ is called the \dfn{spun knot} in $S^4$ or is said to be obtained by \dfn{spinning} $K$. If one has a nontrivial family of embeddings of an arc into $D^3$ then the result is frequently called a twist spun knot, \cite{Friedman05}. Given this history we call the embedding constructed in the lemma above a \dfn{spun embedding}. 
 
We also notice the converse to the above lemma holds. If $W$ has an open book $(B,\pi)$ and an embedding $M\to W$ is transverse to $B$ and the pages of the open book, then there is an induced open book $(B',\pi')$ on $M$ such that the embedding can be described in terms of a spun embedding. 

One can embed open books in a more interesting way by noting that non-trivial diffeomorphisms of a submanifold can be induced by isotopy. 

\begin{lemma}\label{maintopembed}
Suppose that $f:X^4\to D^2$ is a (possibly achiral) Lefschetz fibration with fiber surface $\Sigma$ and vanishing cycles $v_1,\ldots, v_n$.  If $M$ is a $3$--manifold described by an open book $(\Sigma, \phi)$ where $\phi$ can be written as a composition of right and left handed Dehn twists about the $v_i$, then there is a spun embedding of $M$ into $M_{(X,\psi)}$, where $\psi$ is any diffeomorphism of $X$ (equal to the identity in a neighborhood of $\partial X$). 
\end{lemma}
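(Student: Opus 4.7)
The plan is to apply Lemma~\ref{simplespin} with a family $f_t\co \Sigma\to X$ built from the geometric monodromy of the Lefschetz fibration. Write $\phi=\tau_{v_{i_1}}^{\epsilon_1}\circ\cdots\circ\tau_{v_{i_k}}^{\epsilon_k}$ as a composition of Dehn twists with signs $\epsilon_j=\pm 1$. I will construct a loop $\gamma\co [0,1]\to D^2\setminus\{f(p_1),\ldots,f(p_n)\}$ based at a point $x$, realized as a concatenation of small loops around the critical values $f(p_{i_j})$ traversed in the direction determined by $\epsilon_j$ and whether the critical point is Lefschetz or achiral, so that parallel transport of a regular fiber around $\gamma$ realizes $\phi$; this uses item (5) of the list of standard facts about Lefschetz fibrations.

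The basepoint $x$ will be chosen on (or sufficiently close to) $\partial D^2$, so that the fiber $\Sigma_x=f^{-1}(x)$ lies in the neighborhood of $\partial X$ on which $\psi$ is the identity. Since the critical values lie in the interior of $D^2$, the map $f$ is a trivial $\Sigma$-bundle on a collar of $\partial D^2$, so pushing $x$ close enough to $\partial D^2$ forces $\Sigma_x$ into any prescribed neighborhood of $f^{-1}(\partial D^2)\subset \partial X$. This freedom in the placement of $\Sigma_x$ is the key point that lets the construction work for an arbitrary $\psi$.

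Next, define $f_t\co\Sigma\to X$ to be parallel transport of $\Sigma_x$ along $\gamma|_{[0,t]}$, modified in a collar of $\partial\Sigma$ so that $f_t$ is independent of $t$ there. Such a modification is available because on the horizontal boundary $\partial\Sigma\times D^2\subset\partial X$ the natural parallel transport simply translates the $D^2$-coordinate, while each Dehn twist $\tau_{v_i}$ is supported away from $\partial\Sigma$; we may therefore cancel the $D^2$-translation on a collar of $\partial\Sigma$ by a bump-function interpolation without altering the monodromy in the interior. The resulting $f_t$ is a smooth family of proper embeddings with $f_0$ the inclusion of $\Sigma_x$, $f_1=f_0\circ\phi$, and $f_t$ independent of $t$ on a neighborhood of $\partial\Sigma$.

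Finally, the compatibility condition $\psi\circ f_1=f_0\circ\phi$ of Lemma~\ref{simplespin} reduces to $\psi|_{\Sigma_x}=\mathrm{id}$, since $f_1=f_0\circ\phi$ has image $\Sigma_x$; this holds by the choice of $x$. Lemma~\ref{simplespin} then produces the desired spun embedding of $M=M_{(\Sigma,\phi)}$ into $M_{(X,\psi)}$. The main technical nuisance is verifying that the collar modification of $f_t$ is a smooth family of embeddings with the correct endpoints $f_0$ and $f_0\circ\phi$; this is routine given the product structure of $\partial\Sigma\times D^2\subset \partial X$.
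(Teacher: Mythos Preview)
Your proposal is correct and follows essentially the same route as the paper's proof. The paper's three steps---building a family of fiber embeddings over an immersed loop in the base realizing $\phi$ as geometric monodromy, straightening the family near $\partial\Sigma$ via a collar interpolation in the product region $\partial\Sigma\times D^2$, and then extending over the binding---are exactly what one obtains by unpacking Lemma~\ref{simplespin} with your family $f_t$; the only cosmetic difference is that the paper enlarges $\Sigma$ and $X$ by an extra collar to perform the straightening, whereas you do it inside the existing collar.
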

\begin{proof}
Choose a regular value $x$ of $f$ in $D^2$ so that $f^{-1}(x)$ is in the region where $\psi$ is the identity. Also choose paths $\gamma_i$ from $x$ to the critical points $p_i$ of $f$ which are disjoint apart from at $x$ that realize the vanishing cycles. For each $i$ we can choose a disk $D_i$ that is a small neighborhood of $\gamma_i$ (and disjoint from the other $p_j$). Let $c_i=\partial D_i$ isotoped slightly so that it goes through $x$ and so that all the $c_i$ are tangent at $x$. The (regular) fibers of $f$ are oriented surfaces so an orientation on $c_i$ makes $f^{-1}(c_i)$ an oriented $3$-manifold and of course if $c_i$ is oriented as the boundary of $D_i$ (which in turn is oriented as a subset of $D^2$) then $f^{-1}(c_i)$ is the $\Sigma$-bundle over $S^1$ with monodromy a right-handed Dehn twist $\tau_{v_i}$ about $v_i$ if $p_i$ is an ordinary Lefschetz critical points (and the inverse of this if it is an achiral critical point). Considering the opposite orientation $-c_i$ on $c_i$ we see that $f^{-1}(-c_i)$ is the $\Sigma$-bundle over $S^1$ with monodromy a left-handed Dehn twist $\tau_{v_i}^{-1}$ about $v_i$ if $p$ is an ordinary Lefschetz point (and the inverse of this if it is an achiral critical point). Now let $c_i'$ be $-c_i$ isotoped near $x$ so that it is positively tangent to $c_i$ at $x$ (in particular $c_i'$ will be an immersed curve), see Figure~\ref{fig:curves}. Clearly there is an immersion of the previous surface bundle with image $f^{-1}(c_i')$.
\begin{figure}[htb]
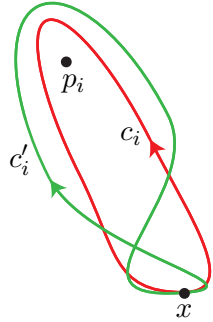
{
\begin{overpic}
{curves}
\put(-2,50){$c_i'$}
\put(40,60){$c_i$}
\put(61, -7){$x$}
\put(18,80){$p_i$}
\end{overpic}}
\caption{The curves $c_i$ and $c_i'$ (slightly offset for clarity).}
\label{fig:curves}
\end{figure}

We will construct our embedding in three steps.

{\bf Step 1(preliminary embedding of mapping tori):} Note that given $\phi:\Sigma\to \Sigma$ which can be written as a composition of right and left handed Dehn twists about $v_i$, there is a path $\gamma:[0,1]\to D^2$ that is a composition of the paths $c_i$ and $c_i'$ so that the mapping torus $T_\phi$ immerses in $X$ with image $f^{-1}(\gamma)$. In other words, there is a map
\[
i:\Sigma\times [0,1]\to X
\]
that when composed with $f$ is $\gamma$ and induces an immersion on $T_\phi$. (More precisely, the pull back of $f:X\to D^2$ by $\gamma$ is a $\Sigma$-bundle over $S^1$ with monodromy $\phi$, now cutting this bundle along a fiber gives the desired immersion.) Notice that the map
\[
\widetilde{e}:\Sigma\times [0,1]\to X\times [0,1]: (p,t)\mapsto (i(p),t)
\]
is an embedding and induces an embedding $e:T_\phi\to T_\psi$ (here, it is important that $f^{-1}(x)$ is in the region where $\psi$ is the identity). 

{\bf Step 2 (fix the embedding near the boundary):} We would now like to extend our embedding over the binding. To do this we need to make $\widetilde{e}$ restricted to each boundary component be independent of $t$. To this end  notice that $e(\partial \Sigma\times\{t\})$ maps into $\partial f^{-1}(\text{int} (D^2))$ which is a union of solid tori (one for each boundary component of $\Sigma$). We will focus on one of these solid tori which we call $S$, the arguments for the others being the same. If $[a,b]\times S^1$ is a neighborhood of a boundary component of $\Sigma$ corresponding to $S$ then since $f$ is a trivial fibration restricted to the neighborhood $N$ of $S$ we see that $N= [a,b]\times S^1\times \text{int} (D^2)$ with $f$ being projection to the last factor. Thus $\widetilde{e}$ restricted to this neighborhood is simply 
\[
([a,b]\times S^1)\times [0,1]\to ([a,b]\times S^1\times\text{int} (D^2))\times [0,1]: (s,\theta, t)\mapsto (s,\theta,\gamma(t),t).
\]

We now slightly enlarge $\Sigma$ and $X$ by adding collar neighborhoods and use these extensions to make $\widetilde{e}$ independent of $t$ near the boundary components. Specifically, we add $[b,c]\times \partial \Sigma$ to $\Sigma$ and $[b,c]\times \partial X$ to $X$. Our monodromy maps can be extended by the identity and then this does not change the diffeomorphism type of $T_\phi$ and $T_\psi$. For each $t$, we let $\gamma_t:[b,c]\to \text{int}(D^2)$ be a straight line homotopy from $\gamma(t)$ to $x$. We now extend $\widetilde{e}$ by the map
\[
([b,c]\times  S^1)\times [0,1]\to ([b,c]\times S^1\times\text{int} (D^2))\times [0,1]: (s,\theta, t)\mapsto (s,\theta, \gamma_t(s),t).
\]
This extended $\widetilde{e}$ now is independent of $t$ near $\partial \Sigma$ and thus descends to an embedding $e:T_\phi\to T_\psi$ that for each boundary component $S^1$ of $\Sigma$, sends $S^1\times S^1$  to $S^1\times \{x\}\times S^1$. 

{\bf Step 3 (extend embedding over binding):} To form $M_{(\Sigma,\phi)}$ from $T_\phi$ we glue in a $S^1\times D^2$ to each boundary component $S^1$ of $\Sigma$ and similarly for $M_{(X,\psi)}$. Notice that $\partial (S^1\times D^2)$ maps into the part of $\partial (\partial X\times D^2)$ written as $\{c\}\times S^1\times \text{int}(D^2)\times S^1$ in the above coordinates. Thus we can extend $e$ over each $S^1\times D^2$ by the map
\[
S^1\times D^2\to S^1\times \text{int}(D^2)\times D^2: (\theta, p)\mapsto (\theta, x, p)
\]
to get an embedding of 
$M_{(\Sigma,\phi)}$ to get an embedding $M=M_{(\Sigma,\phi)}\to M_{(X,\psi)}$.
\end{proof}

\subsection{Embedding oriented $3$--manifolds in $S^5$}
Recall from Proposition~\ref{exampleprop} that there is a Lefschetz fibration of $B^4$ with vanishing cycles generating the hyperelliptic mapping class group. Thus our next result immediately follows from Lemma~\ref{maintopembed}.
\begin{proposition}
If $M$ is a $3$--manifold supported by an open book with hyperelliptic monodromy (that is the monodromy is a composition of positive and negative Dehn twists about $\gamma_1,\ldots, \gamma_{2g}$), then $M$ embeds in $S^5$. In particular if $M$ is obtained as the $2$--fold cover of $S^3$ branched over a link, then it has a spun embedding into $S^5$
\end{proposition}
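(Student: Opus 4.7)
The plan is to invoke Lemma~\ref{maintopembed} directly, with the Lefschetz fibration supplied by Proposition~\ref{exampleprop}. First, Example~\ref{idmonodromy} identifies $S^5$ with $M_{(B^4,\mathrm{id}_{B^4})}$, since $B^4\times[0,1]=D^5$ and the double of $D^5$ is $S^5$. Next, Proposition~\ref{exampleprop} provides a Lefschetz fibration $f\co B^4\to D^2$ whose fiber is a genus $g$ surface $\Sigma$ with one boundary component and whose vanishing cycles are exactly $\gamma_1,\ldots,\gamma_{2g}$ (for any $g\geq 3$). These chain curves are the classical Humphries generators of the hyperelliptic mapping class group of $\Sigma$.

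Now suppose $M$ is supported by an open book $(\Sigma,\phi)$ with $\phi=\tau_{\gamma_{i_1}}^{\epsilon_1}\circ\cdots\circ\tau_{\gamma_{i_k}}^{\epsilon_k}$ for $\epsilon_j\in\{\pm 1\}$. I would apply Lemma~\ref{maintopembed} with $X=B^4$, $\psi=\mathrm{id}_{B^4}$, fiber $\Sigma$, and vanishing cycles $\gamma_1,\ldots,\gamma_{2g}$: since each factor of $\phi$ is a Dehn twist about one of these vanishing cycles, the lemma produces a spun embedding of $M=M_{(\Sigma,\phi)}$ into $M_{(B^4,\mathrm{id})}=S^5$. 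If the given open book for $M$ has page genus smaller than $3$, one first stabilizes it enough times to bring the genus up to $3$; the stabilization Dehn twists can be arranged to be twists about curves isotopic to one of the $\gamma_i$ in the larger-genus surface, so hyperellipticity is preserved.

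For the ``in particular'' statement, represent the branching link $L\subset S^3$ as the closure of a braid $\beta$ on $n$ strands. The branched double cover $\Sigma(L)$ then inherits an open book whose page is the $2$--fold cover of $D^2$ branched over the $n$ puncture points (a surface of genus $\lfloor(n-1)/2\rfloor$) and whose monodromy is the lift $\hat\beta$ of $\beta$. By the Birman--Hilden theorem, $\hat\beta$ commutes with the hyperelliptic involution (the deck transformation) and therefore lies in the hyperelliptic mapping class group; hence it is a product of Dehn twists about $\gamma_1,\ldots,\gamma_{2g}$, and the first part of the proposition yields the spun embedding into $S^5$.

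The proof is essentially immediate once Lemma~\ref{maintopembed} is in hand. The only mildly delicate point is the bookkeeping of page genus (ensuring $g\geq 3$ so that Proposition~\ref{exampleprop} applies), which is handled by stabilization; I do not expect any substantive obstacle beyond this.
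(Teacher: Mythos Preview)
Your argument is correct and matches the paper's approach exactly: the paper's proof is a single sentence invoking Proposition~\ref{exampleprop} (the Lefschetz fibration on $B^4$ with vanishing cycles $\gamma_1,\ldots,\gamma_{2g}$) together with Lemma~\ref{maintopembed}, and you have simply spelled out the details, including the identification $M_{(B^4,\mathrm{id})}\cong S^5$ and the Birman--Hilden argument for the branched double cover statement. One small remark: the constraint $g\geq 3$ in Proposition~\ref{exampleprop} is only there so that the curves $\gamma_{2g+1},\gamma_{2g+2}$ make sense; the $D^4$ Lefschetz fibration with vanishing cycles $\gamma_1,\ldots,\gamma_{2g}$ works for any $g\geq 1$ (the handle cancellation in the proof is genus-independent), so your stabilization maneuver, while not wrong, is unnecessary.
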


We can slightly strengthen the above result as follows. 
\begin{proposition}\label{thisone}
Suppose $M$ is a 3--manifold supported by an open book with monodromy a composition of positive and negative Dehn twists about $\gamma_1,\ldots, \gamma_{2g}$ and $\gamma_{2g+2}$. Then $M$ embeds in $S^5$.
\end{proposition}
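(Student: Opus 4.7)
The plan is to parallel the proof of the previous proposition, applying Lemma~\ref{maintopembed} to a Lefschetz fibration whose vanishing cycles include $\gamma_{2g+2}$. Proposition~\ref{exampleprop} supplies a Lefschetz fibration on the disk bundle $X_{-4}$ of Euler number $-4$ over $S^2$ with vanishing cycles exactly $\{\gamma_1,\ldots,\gamma_{2g},\gamma_{2g+2}\}$, so I would first try the direct analog: apply Lemma~\ref{maintopembed} with $\psi = \mathrm{id}$ to get a spun embedding $M \hookrightarrow M_{(X_{-4},\mathrm{id})}$. But by Example~\ref{idmonodromy} the target is $S^2 \times S^3$, not $S^5$, so this alone is not enough.

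To land the embedding in $S^5$, my plan is to modify the fibration so that its total space becomes $B^4$ while $\gamma_{2g+2}$ stays among the vanishing cycles; then Lemma~\ref{maintopembed} with $X = B^4$ and $\psi = \mathrm{id}$ will give a spun embedding of $M$ into $M_{(B^4,\mathrm{id})} = S^5$ (by Example~\ref{idmonodromy}). I would try to construct such an achiral Lefschetz fibration by augmenting the $X_{-4}$ fibration with an additional achiral critical point whose vanishing cycle $v$ is chosen so that the achiral $2$-handle cancels the extra $(-4)$-framed handle contributed by $\gamma_{2g+2}$. A natural candidate for $v$ is a curve on $\Sigma$ meeting $\gamma_{2g+2}$ transversally in a single point, so that the ordinary $\gamma_{2g+2}$-handle and the achiral $v$-handle would form a geometric canceling pair in the handlebody description of Figure~\ref{fig:stein}, reducing the total space back to $B^4$. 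Since Lemma~\ref{maintopembed} allows the vanishing cycles of the fibration to be more than those Dehn twists appearing in $\phi$, the extra vanishing cycle $v$ does no harm — we only need $\phi$ to factor as a product of Dehn twists about $\gamma_1,\ldots,\gamma_{2g},\gamma_{2g+2}$, which is exactly the hypothesis.

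The main obstacle will be verifying that such a cancellation actually returns the total space to $B^4$. One has to track framings carefully — recall the ordinary critical point contributes framing surface framing $-1$ while the achiral one contributes surface framing $+1$ — and confirm that the geometry of $v$ and $\gamma_{2g+2}$ on $\Sigma$ gives a genuine canceling pair after the earlier handle simplifications in Figure~\ref{fig:stein}. If this direct attempt fails (for instance because the naive pair gives a $4$-manifold with $b_2 = 2$ rather than $B^4$), a fallback is to seek a mapping-class-group relation expressing the $S^3$ open book monodromy $\tau_{\gamma_1}\cdots\tau_{\gamma_{2g}}$ as a different product of Dehn twists involving $\tau_{\gamma_{2g+2}}^{\pm 1}$; any such relation would produce an (achiral) Lefschetz fibration on $B^4$ with $\gamma_{2g+2}$ as a vanishing cycle, to which Lemma~\ref{maintopembed} again applies directly to yield the desired embedding into $S^5$.
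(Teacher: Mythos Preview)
Your plan has a real gap. Two $2$--handles never form a canceling pair in the handle-theoretic sense: cancellation requires handles of adjacent index. So attaching an achiral vanishing cycle $v$ meeting $\gamma_{2g+2}$ once will not ``cancel'' the $\gamma_{2g+2}$--handle; it simply produces some other simply connected $4$--manifold, and there is no mechanism forcing it to be $B^4$. Your fallback has the same flaw in a different guise: a different factorization of the element $\tau_{\gamma_1}\cdots\tau_{\gamma_{2g}}$ yields an (achiral) Lefschetz fibration whose \emph{boundary} open book is that of $S^3$, but the total space is determined by the factorization, not by the monodromy alone. Distinct factorizations of the same mapping class routinely give distinct $4$--manifolds (this is exactly the mechanism behind rational blowdown, for instance), so there is no reason the new total space is $B^4$.

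The paper's proof sidesteps this by abandoning $B^4$ as the target page. It builds a specific achiral Lefschetz fibration---vanishing cycles $\gamma_1,\ldots,\gamma_{2g}$ with $\gamma_3$ achiral, together with two achiral copies of $\gamma_{2g+2}$ and two further achiral copies of $\gamma_6$---and checks by Kirby calculus that the total space is $X=(S^2\times S^2\,\#\,S^2\times S^2)\setminus D^4$. The key external input, which your proposal is missing, is a theorem of Saeki: $S^5$ admits an open book with page $X$. Lemma~\ref{maintopembed} then applies directly. The idea you should take away is that one need not force the page to be $B^4$; it suffices to find \emph{some} $4$--manifold that (i) carries an achiral Lefschetz fibration with $\gamma_1,\ldots,\gamma_{2g},\gamma_{2g+2}$ among its vanishing cycles and (ii) is known, by whatever means, to arise as a page of an open book for $S^5$.
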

\begin{proof}
Take the Lefschetz fibration over $D^2$ with fiber a surface of genus $g$ and vanishing cycles $\gamma_1,\ldots \gamma_{2g}$. All will be ordinary vanishing cycles except for $\gamma_3$ which will be achiral. To this we add two achiral vanishing cycles along $\gamma_{2g+2}$ and two more copies of $\gamma_6$, both achiral. See Figure~\ref{fig:stein2} for the genus $3$ case, for higher genus the extra 1 and 2--handles will all simply cancel. 
\begin{figure}[htb]
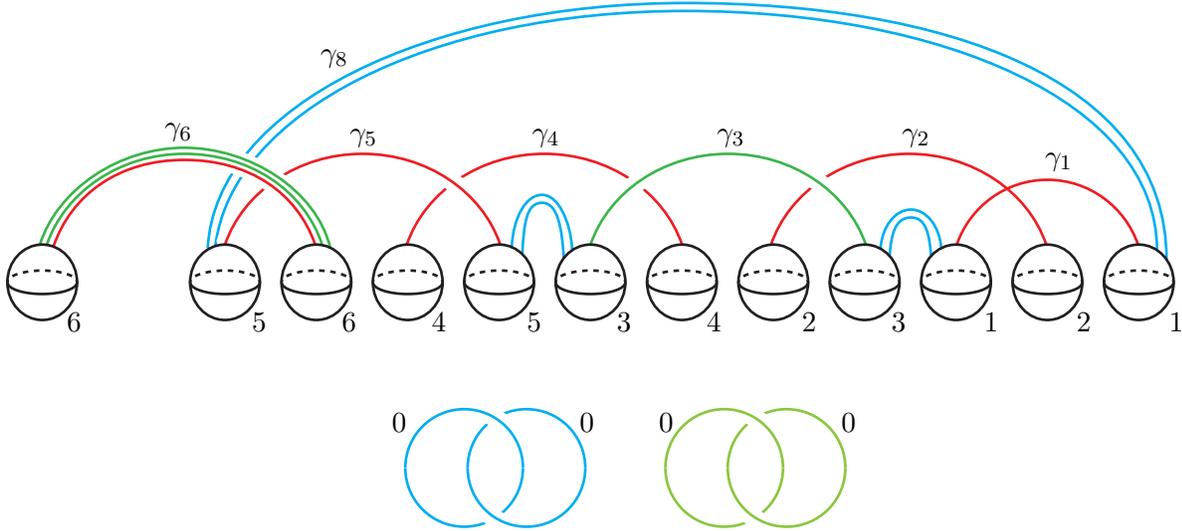
{
\begin{overpic}
{stein2}
\put(440,78){$1$}
\put(405, 78){$2$}
\put(370,78){$1$}
\put(335,78){$3$}
\put(301,78){$2$}
\put(265,78){$4$}
\put(231,78){$3$}
\put(197,78){$5$}
\put(161,78){$4$}
\put(127,78){$6$}
\put(93,78){$5$}
\put(23,78){$6$}
\put(119,180){$\gamma_{8}$}
\put(60,152){$\gamma_6$}
\put(130,150){$\gamma_5$}
\put(199,150){$\gamma_4$}
\put(269,150){$\gamma_3$}
\put(339,150){$\gamma_2$}
\put(393,141){$\gamma_1$}
\put(146, 40){$0$}
\put(217, 40){$0$}
\put(247, 40){$0$}
\put(316, 40){$0$}
\end{overpic}}
\caption{The top figure is a handle presentation describing manifolds in Proposition~\ref{thisone} when the genus is $3$. All $2$--handles have framing $\pm1$ depending on if they are chiral or achiral. The 1--handles are paired as indicated by the numbers. The bottom figure is the result of canceling all the 1--handles with the vanishing cycles $\gamma_1$ to $\gamma_5$ and the chiral $\gamma_6$. }
\label{fig:stein2}
\end{figure}
One may easily check that this 4--manifold is $X=S^2\times S^2\# S^2\times S^2$ with a disk removed. Saeki \cite{Saeki87} has shown that $S^5$ has an open book decomposition with $X$ as its page.  The result follows from   Lemma~\ref{maintopembed}.
\end{proof}

\begin{question}
Is there an open book for $S^5$ from which one can give a spun embedding of all $3$--manifolds?
\end{question}

\begin{remark}
In unpublished work Atsuhide Mori has sketched an idea to construct such spun embeddings into $S^5$ using the open book on $S^5$ with $D^4$ pages. Moreover, while completing a draft of this paper the authors were informed by Dishant Pancholi, Suhas Pandit, and Kuldeep Saha that they could construct similar such embeddings.  It would still be interesting to know if one could construct a Lefschetz fibration structure on a page of an open book for $S^5$ to use the techniques in this paper to find embeddings of all 3--manifolds into $S^5$, {\em cf.\ }Remark~\ref{difficulty}. 
\end{remark}

 \section{Contact embeddings}
 In the first subsection, we discuss an obstruction to embedding contact manifolds that generalizes Kasuya's theorem discussed in the introduction. In the following subsection, we prove Theorem~\ref{mainembedthm} that there is a Stein fillable contact structure on the twisted $S^3$-bundle over $S^2$ into which all closed, oriented contact 3--manifolds embed. In the final subsection, we make an observation about embedding contact manifolds into the standard contact structure on $S^5$. 

\subsection{Obstructions to contact embeddings}\label{sec:obstruction}
We begin with a simple lemma about co-dimension $2$ contact embeddings. 
\begin{lemma}\label{basicobstruction}
Let $e:(M,\xi)\to (W,\xi')$ be a co-dimension $2$ contact embedding with trivial normal bundle. Then $c_1(\xi)=e^*c_1(\xi')$.
\end{lemma}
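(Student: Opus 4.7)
The plan is to exhibit a splitting of complex vector bundles $\xi'|_M \cong \xi \oplus \nu$ on $M$, from which $e^*c_1(\xi') = c_1(\xi'|_M) = c_1(\xi) + c_1(\nu) = c_1(\xi)$ follows immediately once the triviality of $\nu$ is used. To build this splitting, first note that since $de(\xi) \subset \xi'$, the distribution $\xi$ is naturally a sub-bundle of $\xi'|_M$. If $\alpha'$ is a contact form for $\xi'$, then $e^*\alpha'$ has kernel $\xi$ (by the dimension count coming from $de(TM) \pitchfork \xi'$) and is nowhere vanishing, so it is itself a contact form for $\xi$. Consequently $d\alpha'$ restricts to a symplectic form on $\xi$, making $\xi$ a symplectic sub-bundle of $\xi'|_M$. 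I can therefore form the symplectic orthogonal complement $\xi^{\perp} \subset \xi'|_M$, yielding a splitting $\xi'|_M = \xi \oplus \xi^{\perp}$ of symplectic (hence complex) vector bundles, with $\xi^{\perp}$ of real rank $2$.

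Next I would identify $\xi^{\perp}$ with the normal bundle $\nu$. Since $TM \cap \xi'|_M = \xi$ (again by transversality and dimension count) and $\xi \cap \xi^{\perp} = 0$, the restriction to $\xi^{\perp}$ of the projection $TW|_M \to TW|_M / TM = \nu$ is fibrewise injective, hence a real-linear isomorphism of rank-$2$ bundles. Transporting the complex structure across this isomorphism endows $\nu$ with a complex line bundle structure for which $\nu \cong \xi^{\perp}$.

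Finally, any complex line bundle whose underlying real rank-$2$ bundle is trivial has vanishing first Chern class, so $c_1(\nu) = 0$. Therefore $e^*c_1(\xi') = c_1(\xi'|_M) = c_1(\xi) + c_1(\xi^{\perp}) = c_1(\xi)$, as desired. The only step requiring real thought is the identification $\xi^{\perp} \cong \nu$; once that is in place, the conclusion is just additivity of $c_1$ under direct sums.
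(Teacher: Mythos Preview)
Your proof is correct and follows essentially the same idea as the paper's: split $\xi'|_M$ as $\xi$ direct sum a trivial complex line bundle, then use additivity of $c_1$. The paper accomplishes this in one line by invoking the standard contact neighborhood theorem, which gives a neighborhood of $e(M)$ contactomorphic to $(M \times \C,\, \ker(\alpha + r^2\,d\theta))$ and hence directly exhibits $\xi'|_M \cong \xi \oplus \C$. Your version is slightly more hands-on: you build the complement as the symplectic orthogonal $\xi^{\perp}$ and then identify it with the normal bundle to invoke the triviality hypothesis. This avoids appealing to the neighborhood theorem and works purely at the level of vector bundles, which is arguably more elementary; the paper's route is shorter but leans on a heavier black box. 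Either way the content is the same splitting, so the two arguments are really the same proof in different packaging.
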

\begin{proof}
The standard contact neighborhood theorem says that $e(M)$ has a neighborhood contactomorphic to a neighborhood of $M\times \{0\}$ in $M\times \C$ with the contact form $\alpha+r^2\, d\theta$, where $\alpha$ is a contact form for $\xi$ and $(r,\theta)$ are polar coordinates on $\C$. So we see that $\xi'$ along $M$ has a splitting as a complex bundle into $\xi\oplus \C$ and thus $c_1(e^*\xi')=c_1(\xi)+c_1(\C)=c_1(\xi)$.
\end{proof}
We note that Kasuya's result, Theorem~\ref{cantembed}, follows from this.
\begin{proof}[Proof of Theorem~\ref{cantembed}]
We first note that the result in \cite{Kasuya14} actually says that if $(M^{2n-1},\xi)$ embeds in $(W^{2n+1},\xi')$, and $H^2(W;\Z)=0$, then $c_1(\xi)=0$, but the proof gives the stronger version stated in the introduction. To prove the result form the above lemma notice that if $e(M)$ is trivial in homology then it bounds a hypersurface $\Sigma$ that in turn trivializes the normal bundle of $e(M)$, \cite{Kirby89}.  The result now clearly follows. 
\end{proof}
We notice Lemma~\ref{basicobstruction} has other consequences too. Specifically, when searching for a contact manifold into which all contact $3$--manifolds embed one might consider the simplest ones first, that is the product of two spheres. 
\begin{corollary}\label{onlys2s3}
If all contact $3$--manifolds $(M,\xi)$ embeds with trivial normal bundle in a manifold $W^5$ with a co-oriented contact structure $\xi'$ where $W$ is a product of two or fewer spheres, then $W=S^3\times S^2$ and the $c_1(\xi')=\pm 2h$ where $h\in H^2(W,\Z)=\Z$ is a generator.  
\end{corollary}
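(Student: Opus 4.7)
The closed connected $5$-manifolds that are products of at most two (positive-dimensional) spheres are $S^5$, $S^1\times S^4$, and $S^2\times S^3$. The plan is to eliminate the first two possibilities using Lemma~\ref{basicobstruction} and to pin down $c_1(\xi')$ in the third case.

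For $S^5$ and $S^1\times S^4$ we have $H^2(W;\Z)=0$ (in the second case by K\"unneth, since $H^1(S^4)=H^2(S^4)=0$), so every contact structure $\xi'$ on such a $W$ has $c_1(\xi')=0$. Lemma~\ref{basicobstruction} would then force $c_1(\xi)=0$ for every contact $3$--manifold $(M,\xi)$ embedded in $(W,\xi')$ with trivial normal bundle. This is incompatible with the hypothesis: for example, $S^1\times S^2$ has torsion-free $H^2\cong \Z$ and is spin, and Theorem~\ref{otex} (or, in dimension three, the older Eliashberg--Lutz classification of overtwisted contact structures) produces a contact structure in every almost-contact class, in particular one with $c_1(\xi)=2h_0\neq 0$, where $h_0$ generates $H^2(S^1\times S^2;\Z)$. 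Hence neither $S^5$ nor $S^1\times S^4$ can accommodate all contact $3$--manifolds.

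Now take $W=S^2\times S^3$, let $h$ generate $H^2(W;\Z)\cong \Z$, and write $c_1(\xi')=nh$. Since $S^2\times S^3$ is spin, $w_2(TW)=0$, and the splitting $TW\cong \xi'\oplus\underline{\R}$ gives $w_2(\xi')=0$; therefore $n$ is even. To see that $|n|=2$, apply the hypothesis to the contact structure $\xi$ on $S^1\times S^2$ with $c_1(\xi)=2h_0$ from the previous paragraph: there is a contact embedding $e\co (S^1\times S^2,\xi)\hookrightarrow (W,\xi')$ with trivial normal bundle, and writing $e^*h=mh_0$ for some $m\in\Z$, Lemma~\ref{basicobstruction} yields
\[
2h_0 \;=\; c_1(\xi) \;=\; e^*c_1(\xi') \;=\; nm\,h_0,
\]
so $nm=2$. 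Combined with $n$ even, this forces $n=\pm 2$.

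\textbf{Main obstacle.} There is no genuinely hard step here: Lemma~\ref{basicobstruction} carries the argument, and the remainder is bookkeeping on $H^2$ and Stiefel--Whitney classes. The only subtle point is ensuring $n\neq 0$ in the last step; that is packaged automatically by the existence of contact $3$--manifolds with $c_1(\xi)\neq 0$ used to rule out $S^5$ and $S^1\times S^4$ (if $n=0$ the equation $nm=2$ has no solution, which would already contradict the hypothesis applied to $(S^1\times S^2,\xi)$). Note that the corollary gives only necessary conditions on the target $(W,\xi')$; the matching sufficiency statement--that such a $(W,\xi')$ exists and indeed receives every contact $3$--manifold--is the content of Theorem~\ref{otembed}.
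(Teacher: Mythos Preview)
Your proof is correct and follows essentially the same route as the paper's: rule out $S^5$ and $S^1\times S^4$ because $H^2(W;\Z)=0$ forces $c_1(\xi')=0$ and hence, via Lemma~\ref{basicobstruction}, $c_1(\xi)=0$ for every embedded contact $3$--manifold; then on $S^2\times S^3$ use $w_2(TW)=0$ to get $c_1(\xi')$ even and a divisibility argument against a contact $3$--manifold with $c_1$ exactly twice a generator. The only cosmetic difference is that the paper phrases the first step in terms of the embedded $3$--manifold being null-homologous (equivalently, $H_3(W)=0$, which is Poincar\'e dual to your $H^2(W)=0$), and the paper leaves the witness contact $3$--manifold unspecified whereas you name $(S^1\times S^2,\xi)$ explicitly.
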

\begin{proof}
The only products of spheres are $S^5$, $S^1\times S^4$ and $S^2\times S^3$. In the first two examples notice that an embedding of a $3$--manifold always realizes the trivial homology class and thus Lemma~\ref{basicobstruction} implies the Chern class of $\xi$ is trivial. Now consider a co-oriented contact structure $\xi'$ on $W=S^2\times S^3$ choosing a generator $h$ of the second cohomology we see that $c_1(\xi')=2k h$ for some integer $k$ since the mod $2$ reduction of $c_1(\xi')$ is $w_2(\xi')=w_2(TW)=0$. Now given a contact embedding $e:(M,\xi)\to (W,\xi')$ with trivial normal bundle 
we see $c_1(\xi)=2k e^*(h)$. In particular if $k=0$ the Chern class is trivial, if $k\not=0$ then $c_1(\xi)$ is divisible by $2k$. Since we have contact structures on $3$--manifolds with $c_1$ divisible by only $2$ we see that $k=\pm1$. (Notice that the Chern class of any co-oriented contact structure on a $3$--manifold is even for the same reason as for such structures on $W$.)
\end{proof}

\subsection{Contact embeddings in the twisted $S^3$-bundle over $S^2$}

We are now ready to prove our main embedding theorem, Theorem~\ref{mainembedthm}, that says there is a Stein fillable contact structure $\xi$ on the twisted $S^3$-bundle over $S^2$ into which all contact $3$--manifolds embed. 
\begin{proof}[Proof of Theorem~\ref{mainembedthm}]
Let $X$ be the disk bundle over $S^2$ with Euler number $-3$. From Example~\ref{idmonodromy} we see that the manifold $M_{(X,id_X)}$ is the twisted $S^3$-bundle over $S^2$, $S^2\widetilde\times S^3$. We denote the binding of this open book by $Y$ and the fibration $(S^2\widetilde\times S^3)-Y \to S^1$ by $\pi'$. As discussed in Proposition~\ref{exampleprop} there is a Lefschetz fibration $f:X\to D^2$ with fiber genus greater than $2$ and by Theorem~\ref{leftosym} there is an exact symplectic structure (in fact Stein structure) with the properties listed in the theorem (in particular the fibers of $f$ are symplectic). Thus, by Example~\ref{examplestein} the contact structure $\xi_{(X,id_X)}$ on $S^2\widetilde\times S^3$ is Stein fillable by $X\times D^2$. Recall as $\xi_{(X,id_X)}$  is supported by the open book $(Y,\pi')$ we know that there is a contact 1--form $\alpha$ for $\xi_{(X,id_X)}$ so that $d\alpha$ is a symplectic form on each page of the open book and $\alpha$ restricted to the binding is a (positive) contact form on the binding. 

Now given a contact $3$-manifold $(M,\xi)$ we know that $\xi$ can be supported by an open book decomposition $(\Sigma,\phi)$ where $\Sigma$ has connected boundary. We denote the binding of this open book by $B$ and the fibration of its complement by $\pi$. Proposition~\ref{exampleprop} gives us a Lefschetz fibration of $X$ with generic fiber $\Sigma$ and vanishing cycles $\gamma_1,\ldots, \gamma_{2g+1}$ (from Figure~\ref{fig:surface}). As it is well-known that Dehn twists about these curves generate the mapping class group of a surface, there is a smooth embedding of $e:M\to S^2\widetilde\times S^3$ by Lemma~\ref{maintopembed}. We will show by exerting more care in the proof of Lemma~\ref{maintopembed} one may show that $e$ sends each page of the open book for $M$ to a symplectic surface in a page of the open book $(Y,\pi')$ (where the symplectic structure comes from $d\alpha$) and the binding of $M$ maps to a transverse knot in the binding of $Y$. If we can do this then clearly $e^*\alpha$ will define a contact structure supported by $(B,\pi)$ and since $\xi$ is also supported by $(B,\pi)$ we see that $e$ is (isotopic to) a contact embedding. 

Recall in Step 1 of the proof of Lemma~\ref{maintopembed} we constructed an embedding 
\[
\widetilde{e}:\Sigma\times [0,1]\to X\times [0,1]: (p,t)\mapsto (i(p),t)
\]
that would descent to an embedding of the mapping tori $T_\phi$ to $T_{id_X}$. Now the contact form on $T_{id_X}$ is given by $k\, dt + \beta$ where $\beta$ is the exact symplectic form on $X$. We also know that each fiber of $f:X\to D^2$ is symplectic and that $\widetilde{e}$ send $\Sigma$ to fibers of $f$. Thus, we see that $k\, dt + \beta$ pulls back to a contact form on $T_\phi$ by Remark~\ref{usefulcomments}.

Now in Step 2 of the proof of Lemma~\ref{maintopembed} we extended the embedding $e$ over a collar neighborhood of the boundary to make $\widetilde{e}$ (and hence $e$) independent of $t$. Recall in the proof we write a neighborhood of one of the components of $f^{-1}((\text{int} D^2))$) as $[a,b]\times S^1\times \text{int} (D^2)$ and in these coordinates we can use  Theorem~\ref{leftosym} to say that the exact symplectic structure on $X$ is given by $e^s(d\varphi + r^2\, d\theta$) (here we are use $\varphi$ as the angular coordinate on $S^1$, $s$ as the coordinate on $[a,b]$, and $(r,\theta)$ as polar coordinates on $D^2$). Now in Step 2 of the proof of Lemma~\ref{maintopembed}  we considered the extension of $\widetilde{e}$ by 
\[
([b,c]\times S^1)\times [0,1]\to ([b,c]\times S^1\times\text{int} (D^2))\times [0,1]: (s,\varphi, t)\mapsto (s,\varphi, \gamma_t(s),t),
\]
and thus pulling  $k\, dt + e^s(d\varphi + r^2\, d\theta$) back by this map yields 
\[
k\, dt + e^s\left(d\varphi + f_t^2(s)\, \frac{\partial g_t(s)}{\partial s} \, ds\right),
\] 
where $\gamma_t(s)$ expressed in polar coordinates is given by $(f_t(s), g_t(s))$. This is clearly a contact form if either $k$ is chosen sufficiently large or if $c-b$ is sufficiently large that the derivatives of $f_t$ and $g_t$ are sufficiently small. Moreover, if we choose the $g_t(s)$ to be constant near $c$ then it is simply $k\, dt + e^s\, d\varphi$ near each boundary component of $\Sigma$. 

Since we have a standard model for our embedding near $\partial T_\phi$, it is now a simple matter to see that the extension of $e$ over the neighborhoods of the binding has the desired properties. 
\end{proof}

\subsection{Contact embeddings in $(\R^5, \xi_{std})$}
We notice that the same proof as the one given for Theorem~\ref{mainembedthm} also yields the following results. 
\begin{theorem}
If $(M,\xi)$ is a contact $3$--manifold supported by an open book with hyperelliptic monodromy then $(M,\xi)$ embeds in $(S^5,\xi_{std})$. 
\end{theorem}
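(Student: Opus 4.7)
The plan is to mimic the proof of Theorem~\ref{mainembedthm} essentially verbatim, replacing the Lefschetz fibration on the $-3$ disk bundle over $S^2$ with the Lefschetz fibration on $D^4$ given in Proposition~\ref{exampleprop}, whose vanishing cycles are exactly $\gamma_1,\ldots,\gamma_{2g}$. Since these Dehn twists generate the hyperelliptic mapping class group, any hyperelliptic monodromy $\phi$ on the genus $g$ surface $\Sigma$ can be written as a product of positive and negative Dehn twists about the $\gamma_i$, which is precisely what is required to apply Lemma~\ref{maintopembed}.

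The first step is to identify the ambient contact $5$--manifold. By Example~\ref{idmonodromy}, with $X=D^4$ and $\phi=\mathrm{id}$, the manifold $M_{(D^4,\mathrm{id})}$ is the double of $D^4\times[0,1]\cong D^5$, which is $S^5$. By Example~\ref{examplestein}, the supported contact structure $\xi_{(D^4,\mathrm{id})}$ is Stein filled by $D^4\times D^2\cong D^6$ with its product Stein structure, so this contact structure is (contactomorphic to) $\xi_{std}$ on $S^5$. Moreover, Theorem~\ref{leftosym} endows $D^4$ with an exact symplectic (in fact Stein) structure, with fibers of the Lefschetz fibration $f\co D^4\to D^2$ symplectic and a Liouville form $\beta$ behaving as in item (3) of that theorem near the binding of the open book $(D^4,\mathrm{id})$ on $S^5$.

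Now given $(M,\xi)$ supported by an open book $(\Sigma,\phi)$ with $\phi$ hyperelliptic, Lemma~\ref{maintopembed} (applied with $\psi=\mathrm{id}_{D^4}$) produces a smooth spun embedding $e\co M\to S^5$. Exactly as in the proof of Theorem~\ref{mainembedthm}, the three-step construction of $e$ can be tuned so that $e$ sends each page of $(\Sigma,\phi)$ to a symplectic surface inside a page of the $D^4$-open book for $S^5$, and the binding of $M$ to a transverse knot in the binding of the open book on $S^5$. Concretely, the contact form $k\,dt+\beta$ on the mapping torus part of $S^5$ pulls back to a contact form on $T_\phi$ via Step 1 because the pages are symplectic and $\widetilde e$ covers $\Sigma\to X$ by fiber-preserving maps (Remark~\ref{usefulcomments}); Step 2 is the same collar extension argument, where in the standard coordinates $e^s(d\varphi+r^2\,d\theta)$ near the binding, the pullback is $k\,dt+e^s(d\varphi+f_t^2(s)\,g_t'(s)\,ds)$, which is contact for $k$ large; Step 3 extends the embedding transversely across the binding region, with everything in a standard model. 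Thus $e^*\alpha$ defines a contact form on $M$ supported by the open book $(\Sigma,\phi)$, and by Giroux's uniqueness in Theorem~\ref{obsupport} it is isotopic to $\xi$, making $e$ a contact embedding.

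There is essentially no new obstacle: the whole argument of Theorem~\ref{mainembedthm} works as long as we have a Lefschetz fibration over $D^2$ on a Stein filling whose vanishing cycles generate the monodromies we want to realize. The one thing to verify carefully is that the Stein structure coming from Proposition~\ref{exampleprop} on $D^4$ (when all the $1$- and $2$-handles cancel) agrees with the standard Stein structure on $B^4$ so that the boundary contact structure is $\xi_{std}$; this follows from the remark in Proposition~\ref{exampleprop} that the exact symplectic structure is independent of the genus of $\Sigma$, together with the fact that for genus $0$ the Lefschetz fibration is trivial and produces the standard Stein structure on $B^4$.
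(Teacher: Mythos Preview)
Your proposal is correct and follows precisely the route the paper intends: the paper's own proof is the one-line remark that ``the same proof as the one given for Theorem~\ref{mainembedthm} also yields'' this result, and you have faithfully unpacked that remark by swapping the Lefschetz fibration on the $-3$ disk bundle for the one on $D^4$ from Proposition~\ref{exampleprop}. One small comment: your justification that the induced Stein structure on $D^4$ is standard via ``genus $0$'' is slightly off, since Proposition~\ref{exampleprop} is stated for $g\geq 3$; the cleaner way to say it is that, as noted in the proof of that proposition, all the Stein $1$-- and $2$--handles symplectically cancel, leaving the standard Stein $0$--handle $D^4$, hence $D^4\times D^2\cong D^6$ carries the standard Stein structure and the boundary is $(S^5,\xi_{std})$.
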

We note that this theorem is known, see for example \cite{EtnyreFurukawa17}, and can be used to show among other things that all tight contact structures with $c_1=0$ on $L(p,q)$ with $p$ odd or $p$ even and $q=1$ or $p-1$ can be embedded in $(S^5,\xi_{std})$. 

We can similarly give a criterion guaranteeing a contact structure embeds in a Stein fillable contact structure on $S^2\times S^3$.
\begin{theorem}
If $(M,\xi)$ is a contact $3$--manifold supported by an open book $(\Sigma, \phi)$ with $\Sigma$ a genus $g$ surface with one boundary component and $\phi$ a composition of Dehn twists about the curves $\gamma_1,\ldots, \gamma_{2g}$ and $\gamma_{2g+2}$, then $(M,\xi)$ embeds in a Stein fillable contact structure on $S^2\times S^3$. 
\end{theorem}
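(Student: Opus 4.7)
The plan is to run the proof of Theorem~\ref{mainembedthm} almost verbatim, swapping the Euler number $-3$ disk bundle over $S^2$ for the Euler number $-4$ disk bundle. Let $X'$ denote the disk bundle over $S^2$ with Euler number $-4$. By Example~\ref{idmonodromy}, since $-4$ is even, $M_{(X',id_{X'})}$ is diffeomorphic to $S^2\times S^3$. By Proposition~\ref{exampleprop}, $X'$ carries a Lefschetz fibration $f'\co X'\to D^2$ with generic fiber a genus $g$ surface $\Sigma$ and vanishing cycles $\gamma_1,\ldots,\gamma_{2g},\gamma_{2g+2}$; by Theorem~\ref{leftosym} this equips $X'$ with an exact symplectic (in fact Stein) structure with all the properties enumerated there. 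Example~\ref{examplestein} then shows that the supported contact structure $\xi_{(X',id_{X'})}$ on $S^2\times S^3$ is Stein fillable by $X'\times D^2$.

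Given $(M,\xi)$ supported by $(\Sigma,\phi)$ with $\phi$ a composition of Dehn twists about $\gamma_1,\ldots,\gamma_{2g},\gamma_{2g+2}$, Lemma~\ref{maintopembed} (applied with $\psi=id_{X'}$) immediately yields a smooth spun embedding $e\co M\to S^2\times S^3$. The remaining task is the promotion to a contact embedding with respect to $\xi_{(X',id_{X'})}$, and here one copies the three-step construction from the proof of Theorem~\ref{mainembedthm} without change. In Step~1 the preliminary embedding of mapping tori sends pages $\Sigma$ into regular fibers of $f'$, which are symplectic submanifolds of $X'$; thus pulling back the form $k\,dt+\beta$ on $T_{id_{X'}}$ gives a contact form on $T_\phi$ by Remark~\ref{usefulcomments}. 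In Step~2, in the collar neighborhood a component of $\partial X'$ in which the exact symplectic form is $e^s(d\varphi+r^2\,d\theta)$ (using Theorem~\ref{leftosym}(3)), the extension of $\widetilde{e}$ pulls this back to
\[
k\,dt+e^s\!\left(d\varphi+f_t(s)^2\,\frac{\partial g_t(s)}{\partial s}\,ds\right),
\]
which is contact provided $k$ is large enough (or the collar $[b,c]$ is long enough), and which reduces to $k\,dt+e^s\,d\varphi$ near $\partial\Sigma$ once the angular parts $g_t$ are taken constant there. Step~3, extending over the neighborhood of the binding, is then identical to Theorem~\ref{mainembedthm} since the model near $\partial T_\phi$ is the same. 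The resulting $e^*\alpha$ is a contact form supported by $(\partial\Sigma,\pi)$, hence defines $\xi$ up to isotopy, so $e$ is isotopic to a contact embedding.

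The only conceptually new point is the switch from $\gamma_{2g+1}$ to $\gamma_{2g+2}$ as an available vanishing cycle, together with the corresponding change of Euler number from $-3$ to $-4$. Neither of these affects the structure of the paths $c_i,c_i'$ in $D^2$ used to realize $\phi$, nor any of the symplectic estimates near the collar and binding. I therefore expect no genuine obstacle: the argument is routine once one observes that Proposition~\ref{exampleprop} and Example~\ref{idmonodromy} already package the data needed to target $S^2\times S^3$ rather than $S^2\widetilde{\times}S^3$. The mildest subtlety to check is simply that the Stein structure from Proposition~\ref{exampleprop} on $X'$ (which is independent of $g$) is compatible with the fibration $f'$ in the sense of Theorem~\ref{leftosym}, so that Example~\ref{examplestein} genuinely applies and produces a Stein filling; this is immediate from the construction of that Stein structure via Legendrian attaching circles built from the Lefschetz fibration.
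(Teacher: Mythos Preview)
Your proposal is correct and matches the paper's approach exactly: the paper simply remarks that ``the same proof as the one given for Theorem~\ref{mainembedthm} also yields'' this result, and you have correctly identified that the only substantive changes are replacing the Euler number $-3$ disk bundle by the $-4$ disk bundle (so that Example~\ref{idmonodromy} produces $S^2\times S^3$ rather than $S^2\widetilde\times S^3$) and using the Lefschetz fibration from Proposition~\ref{exampleprop} with vanishing cycles $\gamma_1,\ldots,\gamma_{2g},\gamma_{2g+2}$.
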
 

\begin{remark}\label{difficulty} We note that Dehn twists around $\gamma_1,\ldots, \gamma_{2g}$ and $\gamma_{2g+2}$ do not generate the mapping class group of $(\Sigma, \partial \Sigma)$, where $\Sigma$ is a genus $g$ surface with one boundary component. One way to see this is as follows: Consider the spin structure on $\Sigma$ given by the quadratic form $q: H_1(\Sigma; \mathbb{Z}_2) \to \Z_2$ enhancing the intersection product, that assigns $q(\gamma_i)=0$ for $i=1,\ldots, 2g$. It automatically follows that $q(\gamma_ {2g+2})=0$ since $\gamma_1,\gamma_3,\gamma_5$ and $\gamma_{2g+2}$ bound a genus 0 surface (see \cite{johnson} for the relation between spin structures and quadratic forms). Now, Dehn twists around $\gamma_{i}$ preserve this spin structure. On the other hand, it is known that the action of the mapping class group on the set of spin structures on a surface has exactly 2 orbits distinguished by the Arf invariant of $q$, \cite{atiyah, johnson}. 

	A similar argument also implies that one cannot hope to find a set of curves $\{v_i\}$ such that Dehn twists around them generate the mapping class group and also such that the curves $\{v_i\}$ are the vanishing cycles for a Lefschetz fibration on some Stein surface with fiber $\Sigma$, whose total space is a page of an open book on $S^2 \times S^3$. Indeed, since $S^2 \times S^3$ is spin, such a construction would induce a spin structure on $\Sigma$ with $q(v_i)=1$ (as the vanishing cycles bound thimbles, see \cite{stipsicz}), thus the image of the Dehn twists around $v_i$ in $Sp_{2g}(\mathbb{Z})$ can only generate the theta group, a certain subgroup of the $Sp_{2g}(\mathbb{Z})$ generated by anisotropic transvections, see \cite{johnsonmillson}.
\end{remark}

 \section{Contact embeddings in overtwisted contact structures}
In this section we prove Theorem~\ref{otembed} which shows that any contact $3$--manifold contact embeds in a unique overtwisted contact structure on $S^2\times S^3$. To this end we first show how to ``explicitly", in terms of handle decompositions, smoothly embed a $3$--manifold in $S^2\times S^3$ and $S^5$. 

\subsection{Topologically embedding $3$--manifolds in $S^5$ and $S^2\times S^3$}
We begin by embedding a 3--manifold in $S^5$.
Given an oriented $3$--manifold $M$ we can find a handlebody decomposition of the form
\[
M=h^0\cup (h^1_1\cup \ldots h^1_g) \cup (h^2_1\cup \ldots h^2_g)\cup h^3.
\]
where $h^i_j$ is an $i$--handle and the handles are attached in the order in which they appear above. We will build a handlebody structure on $S^5$ in which we see the above handle decomposition for $M$. 

\noindent{\em Step I: Handle decomposition of $M\times D^2$.}
Consider $M\times D^2$. This is a 5-manifold with an analogous handle decomposition
\[
M\times D^2=H^0\cup (H^1_1\cup \ldots H^1_g) \cup (H^2_1\cup \ldots H^2_g)\cup H^3,
\]
where each $H^i_j$ is simply $h^i_j\times D^2$. 

\noindent
{\em Step II: Cancel the 1--handles.} We would now like to cancel the 1--handles $H^1_j$. To this end we notice that $\partial (M\times D^2)=M\times S^1$ and the co-cores of the 1--handles intersect this boundary in solid tori $D^2\times S^1$ where $D^2$ is the stable manifold of $h^1_j$ in $M$. We can now take a simple closed curve $\gamma_i$ in $M$ that intersects the $D^2$ one time then attach a $2$--handle to $M\times D^2$ along $\gamma_i\times \{\theta\}$. This clearly cancels the 1--handle $H^1_j$ and we can choose the $\gamma_i$ disjoint in $M$ so all the extra $2$--handles are attached disjointly. Call the new manifold $W$ and notice that $W$, after canceling handles has a handle decomposition
\[
W=H^0\cup (H^2_1\cup \ldots H^2_g)\cup H^3.
\]

\noindent
{\em Step III: Cancel the 2--handles.}
Now for each $H^2_j$ we are going to choose a sphere $S_j$ in $\partial W$ intersecting the co-core of $H^2_j$ once. To find these spheres notice that $\partial (H^0\cup (H^2_1\cup \ldots H^2_g))$ is the connected sum of $g$ copies of $S^2\times S^2$ (this is simply because all framings on the $2$--handles are 0 since the 3--dimensional 2--handles had trivial framing and the attaching spheres are $S^1$'s which are all isotopic in $S^4$). The belt spheres of the 2-handles are all of the form  $\{p\}\times S^2$ in one of the $S^2\times S^2$ summands. Thus there are obvious complementary spheres $S_j$ (of the form $S^2\times \{p\}$). These spheres can be assumed to be disjoint from the attaching region for our 3--handle $H^3$. So we can attach 3--handles to $W$ along the $S_j$ to get a 5--manifold $W'$ with handle decomposition 
\[
W'=H^0\cup H^3.
\]

\noindent
{\em Step IV: Cancel the 3--handle and complete the embedding.} Add a 2-handle $H^2$ to the belt sphere of $H^3$ with framing 0 (and isotope the attaching sphere so it is in $\partial H^0$). Notice that $H^0\cup H^2$ has boundary $S^2\times S^2$ where the belt sphere for the 2--handle is $S_b=S^2\times \{p\}$ and $S_c=\{p\}\times S^2$ is the union of a copy of the core of the 2-handle and a meridional disk for the attaching sphere $S$ for the 3--handle $H^3$. Notice that $S$ intersects $S_c$ exactly once and is disjoint from $S_b$. Thus it is clear $S$ is in the homology class of $S_b$. By Gabai's 4--dimensional light bulb theorem, \cite{Gabai17pre}, $S$ is isotopic to $S^2\times \{p\}$. Hence, we can take the attaching sphere for $H^3$ to be an unknotted 2--sphere in $S^4=\partial H^0$ and the attaching sphere for $H^2$ to be a meridional $S^1$ to this sphere. We can now clearly attach a cancelling 3-handle for $H^2$ and a cancelling 4--handle for $H^3$ resulting in a new manifold $W''$ with handle decomposition 
\[
W''=H^0
\]
to which we can clearly attach a 5--handle to get $S^5$ with $M$ as a submanifold. 

We now turn to embedding $M$ into $S^2\times S^3$. The first 2 steps are the same. 

\noindent
{\em Modified Step III: Cancel the 2--handles.} As before, we would now like to cancel all the $2$--handles, but since $S^2\times S^3$ does have a $2$--handle in its description we will first add this one and then cancel all the ones coming from the $3$--manifold. So attach $H^2_{g+1}$ to a circle in $M\times \{\theta'\}$ with framing 0. Later we will need some flexibility in this step so for each $j=1,\ldots g$ we choose an integer $k_j$. Now for each $H^2_j$ we are going to choose a sphere $S_j$ in $\partial W$ that intersects the co-core of $H^2_{g+1}$ transversely in $|k_j|$ points (and positively if $k_j>0$ and negatively if $k_j<0$) and intersecting the co-core of $H^2_j$ once and positively. As before, to find these spheres notice that $\partial (H^0\cup (H^2_1\cup \ldots H^2_g)\cup H^2_{g+1})$ is the connected sum of $g+1$ copies of $S^2\times S^2$. Moreover if we consider the $S^2\times S^2$ summand coming from $H^2_{g+1}$ we see that the boundary of the co-core of the handle is simply $\{p\}\times S^2$. Thus given $j$ we can take $k_j$ spheres $S^2\times \{q_1,\ldots, q_k\}$ and then tube them together. This gives a sphere $S_j'$ that has trivial normal bundle and intersects the co-core of $H^2_{g+1}$ the correct number of times. We can take a simpler sphere $S_j''$ that intersects the co-core of $H^2_j$ exactly once (for future use notice that this sphere might run over some of the previously cancelled 1 and $2$--handles). Now choose an arc in $\partial W$ connecting $S_j'$ to $S_j''$. Tubing these spheres together gives the desired attaching spheres $S_j$ for our $3$--handles. After attaching these $3$--handles we have a 5--manifold $W''$ with handle decomposition
\[
W'=H^0\cup H^2_{g+1} \cup H^3.
\]

\noindent
{\em Modified Step IV: Cancel the 3--handle and complete the embedding.} 
We can now cancel the 3--handle as in Step IV above since everything there could be assumed to take place in the complement of a ball where $H^2_{g+1}$ is attached. After cancelling we are left with a 5--manifold $W''$ with handlebody decomposition 
\[
W''=H^0\cup H^2_{2g+1}
\]
and $W''\cong S^2\times D^3$. Finally we attach a $3$--handle $H^3$ and a $5$--handle $H^5$ to get $S^2\times S^3$ and we clearly have $M$ as a submanifold. 

\subsection{Embedding contact $3$--manifolds in the overtwisted contact structure on $S^2\times S^3$}
\begin{proof}[Proof of Theorem~\ref{otembed}]
Given a contact $3$--manifold $(M,\xi)$, let $\alpha$ be a contact $1$--form for $\xi$. Notice that $M\times D^2$ has a contact structure $\xi'$ given by $\alpha+r^2\, d\theta$. Using the smooth embedding of $M$ into $S^2\times S^3$ from the previous section we will show that $\xi'$ can be extended to an almost contact structure $\eta$ on $S^2\times S^3$ with Chern class twice a generator of $H^2(S^2\times S^3;\Z)$. Then Theorem~\ref{otex} says $\eta$ may be homotoped to an actual contact structure $\xi''$ on $S^2\times S^3$ that agrees with $\xi'$ on $M\times D^2$. Thus we have a contact embedding of $(M,\xi)$ into $(S^2\times S^3,\xi'')$. Moreover since $H^2(S^2\times S^3; \Z)$ has no $2$--torsion the proof of the theorem is then complete by noting that Theorem~\ref{otex} says $\xi''$ is uniquely determined by the fact that its Chern class is twice a generator in co-homology as explained in the beginning of Section~\ref{sec:ot}.

We are left to construct the almost contact structure $\eta$. Recall from Section~\ref{sec:ot} that almost contact structures are sections of the $SO(5)/U(2)$-bundle associated to the tangent bundle of the manifold. It is well-known that the tangent bundle of $S^2\times S^3$ is trivial. So sections of this bundle are equivalent to functions from $S^2\times S^3$ to $SO(5)/U(2)$. Recall that $\pi_k(SO(5)/U(2))=0$ for $k\leq 5$ except for $k=2$ when it is $\Z$ (this is because $SO(5)/U(2)\cong \C P^3$). So by obstruction theory we can extend $\xi'$ on $M\times D^2$ to $\eta$ on the 2--skeleton. 
Extend $\eta$ over $H^2_{g+1}$ so that it maps to a generator of $\pi_1(SO(5)/U(2))$ (recall that $H^2_{g+1}$ can be attached to a circle in a small ball in $\partial W$). The attaching region for each additional 3--handle is an $S^2$ and $\eta$ will send it to some element of $\pi_1(SO(5)/U(2))$. By choosing the $k_j$ in the Modified Step III above correctly we assume $\eta$ sends this $S^2$ to zero. Thus we can extend $\eta$ over the 3--handles added in Step III. The 2 and 3--handle added in the original Step IV cancel each other so we can extend $\eta$ over them too.  The 3--handle $H^3_f$ attached in the Modified Step IV is attached to the boundary of the co-core of the added 2--handle, so $\eta$ restricted to the attaching sphere of $H^3_f$ is null-homotopic (since it extends over the co-core of the 2--handle) and so can be assumed to be constant on the attaching region. Hence, we can extend $\eta$ over $H^3_f$. As the remaining handles have index above 3 we can extend $\eta$ over them and thus to all of $S^2\times S^3$

If we let $\eta'$ be the almost contact structure that is the constant map to $SO(5)/U(2)$ then the obstruction theory difference class $d(\eta,\eta')$ satisfies 
\[
2d(\eta,\eta')=c_1(\eta)-c_1(\eta'),
\]
see \cite{Geiges08, Hamilton08}.
Since $c_1(\eta')=0$ and $H_2(S^2\times S^3)$ is generated by $H^2_{g+1}$ which is sent to the generator of $\pi_1(SO(5)/U(2))$ we see $c_1(\eta)=\pm 2\in H^2(S^2\times S^3)$. 

We note that as the embedding constructed above is disjoint from the final 3--handle in $S^2\times S^3$ it is trivial in $H_3(S^2\times S^3)$ and thus has trivial normal bundle \cite[Theorem~VIII.2]{Kirby89}, thus Corollary~\ref{onlys2s3} can be applied. 
Since there are contact structures on 3--manifolds with first Chern classes twice a generator, Corollary~\ref{onlys2s3} says that any contact structure on $S^2\times S^3$ into which all contact 3--manifolds embed must have first Chern class $\pm 2$. 
Since overtwisted contact structures on $S^2\times S^3$ are determined by their first Chern class there are exactly two overtwisted contact structures, up to isotopy, satisfying this requirement. Moreover, since $S^2\times S^3$ admits an orientation reversing diffeomorphisms that acts as minus the identity on the second homology, these two contact structures are contactomorphic. Thus we see that there is exactly one overtwisted contact structure on $S^2\times S^3$ (up to contactomorphism) into which all contact 3--manifolds embed. 
\end{proof}
\def\cprime{$'$} \def\cprime{$'$}


\begin{thebibliography}{10}


\bibitem{atiyah} M.~{Atiyah}. Riemann surfaces and spin structures, Ann. Sci. \'Ecole Norm. Sup. (4), 1971,  47--62.
	
\bibitem{BormanEliashbergMurphy}
M.~{Borman}, Y.~{Eliashberg}, E.~{Murphy}.
\newblock {Existence and classification of overtwisted contact structures in
  all dimensions}.
\newblock {\em ArXiv e-prints}, April 2014.

\bibitem{CasalsMurphyPresas}
R.~{Casals}, E.~{Murphy}, F.~{Presas}.
\newblock {Geometric criteria for overtwistedness}.
\newblock {\em ArXiv e-prints}, March 2015.

\bibitem{CasalsPresasSandon16}
Roger Casals, Francisco Presas, and Sheila Sandon.
\newblock Small positive loops on overtwisted manifolds.
\newblock {\em J. Symplectic Geom.}, 14(4):1013--1031, 2016.

\bibitem{DGvK} F.~{Ding}, H.~{Geiges}, O.~{van Koert}. Diagrams for contact 5-manifolds. J. Lond. Math. Soc. (2) 86 (2012) no. 3, 657--682.

\bibitem{EtnyreFurukawa17}
	J.~B.~{Etnyre}, R.~{Furukawa}.
\newblock Braided embeddings of contact 3-manifolds in the standard contact
  5-sphere.
\newblock {\em J. Topol.}, 10(2):412--446, 2017.

\bibitem{Freedman82}
	M.~H.~{Freedman}.
\newblock The topology of four-dimensional manifolds.
\newblock {\em J. Differential Geom.}, 17(3):357--453, 1982.

\bibitem{Friedman05}
	G.~{Friedman}.
\newblock Knot spinning.
\newblock In {\em Handbook of knot theory}, pages 187--208. Elsevier B. V.,
  Amsterdam, 2005.

\bibitem{Gabai17pre}
D.~{Gabai}.
\newblock {The 4-Dimensional Light Bulb Theorem}.
\newblock {\em ArXiv e-prints}, May 2017.

\bibitem{Geiges08}
	H.~{Geiges}.
\newblock {\em An introduction to contact topology}, volume 109 of {\em
  Cambridge Studies in Advanced Mathematics}.
\newblock Cambridge University Press, Cambridge, 2008.

\bibitem{gompf} R.~E.~{Gompf}, Toward a topological characterization of symplectic manifolds, J. Symplectic Geom. 2 (2004) 177--206 

\bibitem{Gironella17}
F.~{Gironella}.
\newblock {On some examples and constructions of contact manifolds}.
\newblock {\em ArXiv e-prints: 1711.09800}, November 2017.

\bibitem{Giroux02}
	E.~{Giroux}.
\newblock G\'eom\'etrie de contact: de la dimension trois vers les dimensions
  sup\'erieures.
\newblock In {\em Proceedings of the International Congress of Mathematicians,
  Vol. II (Beijing, 2002)}, pages 405--414, Beijing, 2002. Higher Ed. Press.
  
  \bibitem{Giroux17}
E.~{Giroux}.
\newblock {Ideal Liouville Domains - a cool gadget}.
\newblock {\em ArXiv e-prints: 1708.08855}, August 2017.

\bibitem{Gromov86}
	M.~{Gromov}.
\newblock {\em Partial differential relations}, volume~9 of {\em Ergebnisse der
  Mathematik und ihrer Grenzgebiete (3) [Results in Mathematics and Related
  Areas (3)]}.
\newblock Springer-Verlag, Berlin, 1986.

\bibitem{Hamilton08}
	M.~{Hamilton}.
\newblock {\em On symplectic 4-manifolds and contact 5-manifolds}.
\newblock PhD thesis, Ludwig-Maximilians-Universit\"at M\"unchen, 2008.

\bibitem{Hirsch61}
	M.~W.~{Hirsch}.
\newblock On imbedding differentiable manifolds in euclidean space.
\newblock {\em Ann. of Math. (2)}, 73:566--571, 1961.

\bibitem{johnson} D.~{Johnson}. Spin structures and quadratic forms on surfaces, J. London Math. Soc. (2), 1980, 365--373. 

\bibitem{johnsonmillson} D.~{Johnson}, J.~J.~{Millson} Modular Lagrangians and the theta multiplier, Invent. Math., 100, 1990, 143--165. 

\bibitem{Kasuya14}
	N.~{Kasuya}.
\newblock The canonical contact structure on the link of a cusp singularity.
\newblock {\em Tokyo J. Math.}, 37(1):1--20, 2014.

\bibitem{Kirby89}
	R.~C.~{Kirby}.
\newblock {\em The topology of {$4$}-manifolds}, volume 1374 of {\em Lecture
  Notes in Mathematics}.
\newblock Springer-Verlag, Berlin, 1989.

\bibitem{Torres11}
	D.~{Mart{\'{\i}}nez~Torres}.
\newblock Contact embeddings in standard contact spheres via approximately
  holomorphic geometry.
\newblock {\em J. Math. Sci. Univ. Tokyo}, 18(2):139--154, 2011.

\bibitem{Mori04}
	A.~ {Mori}.
\newblock Global models of contact forms.
\newblock {\em J. Math. Sci. Univ. Tokyo}, 11(4):447--454, 2004.

\bibitem{OzbagciStipsicz04}
	B.~{Ozbagci}, A.~I.~{Stipsicz}.
\newblock {\em Surgery on contact 3-manifolds and {S}tein surfaces}, volume~13
  of {\em Bolyai Society Mathematical Studies}.
\newblock Springer-Verlag, Berlin, 2004.

\bibitem{PancholiPanditSaha}
	S.~{Pandit}, D.~{Pancholi}, K~.{Saha}.
\newblock Open books and embeddings of closed manifolds.
\newblock Preprint.

\bibitem{Saeki87}
O.~{Saeki}.
\newblock On simple fibered knots in {$S^5$} and the existence of decomposable
  algebraic {$3$}-knots.
\newblock {\em Comment. Math. Helv.}, 62(4):587--601, 1987.

\bibitem{seidel} P.~{Seidel}. Fukaya categories and Picard-Lefschetz theory. Zurich Lectures in Advanced Mathematics. European Mathematical Society (EMS), Z\"urich, 2008.

\bibitem{Shiomi91}
	T.~{Shiomi}.
\newblock On imbedding {$3$}-manifolds into {$4$}-manifolds.
\newblock {\em Osaka J. Math.}, 28(3):649--661, 1991.


\bibitem{stipsicz} A.~I.~{Stipsicz}. Spin structures on Lefschetz fibrations. Bull. London Math. Soc. 33 (2001), no. 4, 466--472. 


\bibitem{thurston} W.~P.~{Thurston}. Some simple examples of symplectic manifolds, Proc. Amer. Math. Soc. 55 (1976) 467--468

\bibitem{ThurstonWinkelnkemper75}
	W.~P.~{Thurston}, H.~E.~{Winkelnkemper}.
\newblock On the existence of contact forms.
\newblock {\em Proc. Amer. Math. Soc.}, 52:345--347, 1975.

\bibitem{Wall65}
	C.~T.~C.~{Wall}.
\newblock All {$3$}-manifolds imbed in {$5$}-space.
\newblock {\em Bull. Amer. Math. Soc.}, 71:564--567, 1965.

\bibitem{Whitney44}
	H.~{Whitney}.
\newblock The self-intersections of a smooth {$n$}-manifold in {$2n$}-space.
\newblock {\em Ann. of Math. (2)}, 45:220--246, 1944.

\end{thebibliography}

\end{document}